\newtheorem{theorem}{Theorem}[section]
\newtheorem{proposition}[theorem]{Proposition}
\newtheorem{lemma}[theorem]{Lemma}
\newtheorem{corollary}[theorem]{Corollary}
\theoremstyle{definition}
\newtheorem{example}[theorem]{Example}
\newtheorem{definition}[theorem]{Definition}
\newtheorem{remark} [theorem] {Remark}
\newtheorem{conjecture} [theorem] {Conjecture}
\begin{document}
\title{Spectrum of weighted composition operators. Part XI. The essential spectra of some weighted composition operators on the disc algebra.}

\author{Arkady Kitover}

\address{Community College of Philadelphia, 1700 Spring Garden St., Philadelphia, PA, USA}

\email{akitover@ccp.edu}

\author{Mehmet Orhon}

\address{University of New Hampshire, 105 Main Street
Durham, NH 03824}

\email{mo@unh.edu}

\subjclass{Primary 47B33; Secondary 37F10, 37F15}

\date{\today}

\keywords{Weighted composition operators, essential spectra, Blaschke products, complex dynamics}
\begin{abstract}
  We obtain a complete description of semi-Fredholm spectra of operators of the form $(Tf)(z) = w(z)f(B(z)$ acting on the disc algebra in the case when $B$ is either elliptic or double parabolic finite Blaschke product of degree $d \geq 2$, and $w$ has no zeros on the unit circle. In the case when $B$ has zeros on the unit circle we state a conjecture that would provide a description of the essential spectra in this case, but sofar is proved only in one direction. Our results hint on the possibility of interesting connections between the spectral properties of weighted composition operators and complex dynamics.
\end{abstract}

\maketitle

\markboth{Arkady Kitover and Mehmet Orhon}{Spectrum of weighted composition operators.}

\section{Introduction} 
Weighted composition operators on Banach spaces of analytic functions remain the subject of intensive research. While the appearance of the excellent books~\cite{Sha}, ~\cite{Cow}, ~\cite{Bo}, and~\cite{Lef} considerably enriched our knowledge of this subject, many challenging and interesting questions remain unsolved. One of the reasons of the continuing interest in the properties of composition and weighted composition operators is their intrinsic connection to topological and complex dynamics and that was our main motivation when writing this paper.

In~\cite{KO} the authors investigated some properties of essential spectra of weighted composition operators on uniform algebras. As a consequence we obtained in~\cite{KO} a description of essential spectra of weighted automorphisms of the disc algebra, and in~\cite{KO1} we extended these results to weighted automorphisms of the polydisc algebras. 

The next logical step is to investigate the essential spectra of weighted isometries of the disc algebra, i.e. of the operators of the form $(Tf)(z) = w(z)f(B(z))$, where $f$ and $w$ belong to the disc algebra and $B$ is a finite Blaschke product of degree $d \geq 2$.

In Theorem~\ref{t4} we provide a full description of the essential spectra of the weighted isometries of the disc algebra in the case when the Blaschke product $B$ is either elliptic or double parabolic (see the definition below) and the weight $w$ does not have zeros on the unit circle.

The case when $w$ has zeros on the unit circle is considerably more complicated. The results related to this case are contained in Theorems~\ref{t6} and~\ref{t11}, see also Examples~\ref{e6} and~\ref{e7} and Conjecture~\ref{co1}.

\section{Preliminaries and axillary results} 

Let us briefly recall the basic notions and notations used below.  All the linear spaces are considered over the field $\mathds{C}$ of all complex numbers.

\noindent Let $T$ be a continuous linear operator on a Banach space $X$. We denote the spectrum of the operator $T$ as $\sigma(T)$.

\noindent Recall that the approximate point spectrum of $T$ is defined as 
 
\begin{equation} \label{eq1}
\begin{split}
 & \sigma_{a.p.}(T) = \{\lambda \in \mathds{C} : \; \text{there are} \; x_n \in X \; \text{such that} \\
 &  \|x_n\|=1 \; \text{and} \; Tx_n - \lambda x_n \rightarrow 0\},
 \end{split}
\end{equation}

Recall that a sequence $x_n \in X$ is called singular if it does not contain any norm-convergent subsequence.

 The upper and lower semi-Fredholm spectra $\sigma_{usf}(T)$ and $\sigma_{lsf}(T)$ of $T$ are defined as follows (see \cite{EE}):

\begin{equation} \label{eq2}
\begin{split}
& \lambda \in \sigma_{usf}(T) \Leftrightarrow \lambda \in \sigma_{a.p.}(T) \; \text{and the sequence} \; x_n \in X \; \text{from} \; (1) \\
& \text{ can be chosen in such a way that it is singular}.
\end{split}
\end{equation}

\begin{equation} \label{eq3}
  \sigma_{lsf}(T) = \sigma_{usf}(T^\prime),
\end{equation}
where $T^\prime$ is the Banach adjoint of $T$.

By $\mathds{A}$ we denote the disk algebra, the Banach algebra of all the functions analytic in the open unit disk $\mathds{U}$ and continuous in the closed unit disk $\mathds{D}$ endowed with the supremum norm.

Let us recall that a finite Blaschke product of degree $d$ is defined as
\begin{equation} \label{eq4}
  B(z) =e^{i\theta} \prod \limits_{j=1}^d \frac{z-a_j}{1-\overline{a_j}}, \; |a_j| < 1.
\end{equation}
We will always assume that $d > 1$.

We need the following classification of finite Blaschke products. See e.g.~\cite[Definition 3.2, p. 25]{Co}.

Let $B$ be a finite Blaschke product of degree $d > 1$. Let $z_0 \in \mathds{D}$ be the Wolff-Denjoy point of $B$ (i.e. the unique point in $\mathds{D}$ such that the iterations of $B$ converge to $z_0$ uniformly on compact subsets of $\mathds{U}$). We say that $B$ is:

\textbf{elliptic} if $z_0 \in \mathds{U}$,

\textbf{hyperbolic} if $z_0 \in \mathds{T} = \partial \mathds{D}$ and $0 < |B^\prime(z_0)| < 1$,

\textbf{single parabolic} if $z_0 \in \mathds{T}$, $|B^\prime(z_0)| = 1$, and $z_0$ has multiplicity 3,

\textbf{doubly parabolic} if $z_0 \in \mathds{D}$, $|B^\prime(z_0)| = 1$, and $z_0$ has multiplicity 2. 

\bigskip

We need two deep results from topological dynamics. The first of them is due to M. Shub.

   \begin{theorem} \label{t2} (see~\cite{Sh} and~\cite[Theorem 4.4, p.32]{Co})
Let $B$ be a finite Blaschke product of degree $d \geq 2$. The map $B : \partial \mathds{D} \rightarrow \partial \mathds{D}$ is topologically conjugate to $z^d$ if and only if $B$ is either elliptic or double parabolic.
\end{theorem}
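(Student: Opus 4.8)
The plan is to reduce Theorem~\ref{t2} to Shub's theorem on expanding maps of the circle. First observe that, in every one of the four cases, $B|_{\partial\mathds D}$ is an orientation--preserving $C^{\infty}$ covering map of $\partial\mathds D$ of degree $d$ with no critical point on $\partial\mathds D$: differentiating $B$ and using $|B(z)|=|z|=1$ one gets
\begin{equation*}
\left|\frac{zB'(z)}{B(z)}\right|=\sum_{j=1}^{d}\frac{1-|a_{j}|^{2}}{|z-a_{j}|^{2}}>0,\qquad |z|=1 .
\end{equation*}
The external input is Shub's theorem, together with its purely topological sharpening (Shub--Sullivan, Coven--Reddy): a positively expansive (equivalently, topologically expanding) continuous self--map of $\partial\mathds D$ of degree $d$ is topologically conjugate to $z\mapsto z^{d}$. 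Since $z\mapsto z^{d}$ is topologically expanding and expansivity is a conjugacy invariant, Theorem~\ref{t2} becomes equivalent to the statement that $B|_{\partial\mathds D}$ is topologically expanding if and only if $B$ is elliptic or doubly parabolic, and this is what I would prove.

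For the ``if'' direction I would treat the elliptic case first. After conjugating $B$ by an automorphism of $\mathds U$ --- which replaces $B$ by a finite Blaschke product of the same degree and alters neither hypothesis nor conclusion --- we may assume the Wolff--Denjoy point is $0$, so $B(0)=0$ and, as $d\ge 2$, $|B'(0)|<1$ by the Schwarz lemma; thus $0$ is a (super)attracting fixed point. Viewing $B$ as a rational map of the Riemann sphere and using the reflection identity $B(1/\bar z)=1/\overline{B(z)}$, one checks that the basin of $0$ is exactly $\mathds U$ and the basin of $\infty$ is exactly $\widehat{\mathds C}\setminus\mathds D$, hence $\partial\mathds D=J(B)$, and that the postcritical set of $B$ is contained in $\mathds U\cup(\widehat{\mathds C}\setminus\mathds D)$, disjoint from $J(B)$. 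So $B$ is a hyperbolic rational map, uniformly expanding on $J(B)=\partial\mathds D$, and Shub's theorem applies directly. In the doubly parabolic case one still has $\partial\mathds D=J(B)$, but the Wolff point $z_{0}\in\mathds T$ is now a parabolic fixed point with $B'(z_{0})=1$, so $B|_{\partial\mathds D}$ is no longer $C^{1}$--expanding. Here I would use the Leau--Fatou flower theorem together with the reflection identity: in the doubly parabolic configuration the attracting petals at $z_{0}$ point into $\mathds U$ and into $\widehat{\mathds C}\setminus\mathds D$, while the repelling petals lie \emph{along} $\partial\mathds D$, one on each side of $z_{0}$; consequently $z_{0}$ is topologically repelling from both sides along the circle, all other periodic points of $B|_{\partial\mathds D}$ are repelling, and a short estimate shows that distinct nearby circle orbits eventually become uniformly separated. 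Hence $B|_{\partial\mathds D}$ is topologically expanding and the topological form of Shub's theorem gives the conjugacy with $z\mapsto z^{d}$. (Alternatively one may appeal to the $J$--stability of geometrically finite rational maps: a small perturbation destroys the parabolic point, producing a hyperbolic Blaschke product whose restriction to $\partial\mathds D$ is conjugate to $z\mapsto z^{d}$, and the conjugacy survives the perturbation.)

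For the ``only if'' direction, suppose $B|_{\partial\mathds D}$ is topologically conjugate to $z\mapsto z^{d}$, hence topologically expanding, hence without attracting or one--sidedly attracting periodic points. If $B$ were hyperbolic, the Wolff point $z_{0}\in\mathds T$ would be a fixed point with $0<|B'(z_{0})|<1$, i.e.\ an attracting fixed point of $B|_{\partial\mathds D}$ --- impossible (equivalently, $J(B)\cap\partial\mathds D$ would then be a Cantor set properly contained in $\partial\mathds D$). If $B$ were single parabolic, then $z_{0}\in\mathds T$ with $B'(z_{0})=1$, and the flower--theorem analysis together with the reflection identity shows that in this configuration a petal at $z_{0}$ protrudes along $\partial\mathds D$ on the attracting side, so that $z_{0}$ is a one--sidedly attracting fixed point of $B|_{\partial\mathds D}$ and part of $\partial\mathds D$ falls into the Fatou set --- again impossible. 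Therefore $B$ is elliptic or doubly parabolic.

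I expect the main difficulty to be the local dynamical analysis at the Wolff point in the two parabolic cases: one has to show that being \emph{doubly} parabolic forces both repelling petals of $z_{0}$ onto $\partial\mathds D$ (so the circle map stays topologically expanding with $J(B)=\partial\mathds D$), whereas being \emph{single} parabolic forces an attracting petal onto $\partial\mathds D$ (breaking expansivity); this is exactly where the multiplicity of $z_{0}$ and the rigidity imposed by $B(1/\bar z)=1/\overline{B(z)}$ enter. The remaining ingredients --- Shub's theorem and its topological refinement, and the implication from topological expansivity to conjugacy with $z\mapsto z^{d}$ --- I would quote from the literature.
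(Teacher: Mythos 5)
The paper does not prove Theorem~\ref{t2} at all: it is imported verbatim from Shub~\cite{Sh} and from \cite[Theorem 4.4]{Co}, so there is no in-paper argument to compare yours against. Judged on its own, your strategy is the standard one and is essentially sound: the identity $zB'(z)/B(z)=\sum_j(1-|a_j|^2)/|z-a_j|^2>0$ on $\mathds{T}$ is correct, the reduction to positive expansiveness via Shub and its topological refinement (Coven--Reddy) is legitimate, the elliptic case is a clean hyperbolic-rational-map argument, and the ``only if'' direction (an attracting or one-sidedly attracting Denjoy--Wolff point on $\mathds{T}$ puts an arc of the circle into the Fatou set, which is incompatible with conjugacy to $z^d$) is right.

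The one genuine gap is the step you yourself flag as the main difficulty, and it deserves emphasis because the reflection identity alone does \emph{not} settle it. For a boundary parabolic point of multiplicity $3$ there are two attracting and two repelling petals, and \emph{both} configurations --- attracting directions tangent to $\mathds{T}$, or repelling directions tangent to $\mathds{T}$ --- are symmetric under $z\mapsto 1/\bar z$ and compatible with the Denjoy--Wolff theorem at first glance; so ``Leau--Fatou plus reflection'' is not yet a proof. What actually forces the repelling petals onto the circle is a sign computation: transporting to the upper half-plane with the Wolff point at $\infty$, the Herglotz--Nevanlinna form gives $F(w)=w+b+\sum_j c_j/(d_j-w)$ with $c_j>0$, so in the multiplicity-$3$ case ($b=0$) one has $F(w)=w-(\sum_j c_j)/w+O(w^{-2})$ with a \emph{negative} $1/w$-coefficient, which is exactly the statement that $\mathds{R}$ is repelled at $\infty$. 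You should carry this out explicitly. Two further cautions: (i) your petal count (two repelling petals on the circle) corresponds to a fixed point of multiplicity $3$, whereas the paper's Definition assigns multiplicity $2$ to the doubly parabolic case --- your dynamical picture is the correct one, and the paper's definition appears to have the multiplicities of the two parabolic cases interchanged; (ii) the parenthetical alternative via ``$J$-stability of geometrically finite maps'' is unreliable, since parabolic parameters are precisely where $J$-stability can fail (parabolic implosion), so the perturbation argument should not be relied upon.
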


To state the second result we introduce the following notation. Let $d > 1$ and let $t \in \partial \mathds{D}$ be a periodic point of the map $\varphi(z) = z^d$ of the smallest period $m$. Then the ergodic $\varphi$-invariant measure $\mu_t$ is defined as
\begin{equation}\label{eq20}
  \mu_t = \sum \limits_{j=0}^{m-1} \delta_{\varphi^j(t)}.
\end{equation}
We will call the ergodic measures defined by the formula~(\ref{eq20}) periodic ergodic measures. The following theorem follows from~\cite[Proposition 21.3, p.194]{DGS} and from~\cite[Theorem 45]{KLO}.

\begin{theorem} \label{t3} Let $\varphi(z) = z^d$, where $d > 1$. The set of all periodic ergodic measures is dense in the set of all $\varphi$-invariant ergodic measures in the topology 
$\sigma(C(\partial \mathds{D}),C(\partial \mathds{D})^\prime)$.
\end{theorem}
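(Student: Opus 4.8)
The plan is to reduce the assertion to the classical density theorem for periodic measures of the full one-sided shift and then transport it to $\varphi$. Recall first that $\varphi(z)=z^{d}$ admits a Markov partition of $\partial\mathds D$ into $d$ arcs, which yields a continuous surjection (topological factor map) $\pi\colon\Sigma_{d}\to\partial\mathds D$ from the full one-sided shift $(\Sigma_{d},S)$ on $d$ symbols, intertwining $S$ with $\varphi$, at most two-to-one and bijective off a countable set; equivalently, being an expanding map of the circle, $\varphi$ has the Bowen specification property. I expect \cite[Theorem 45]{KLO} to enter exactly here, either to record the specification property of $\varphi$, or to state directly the density of periodic measures for $\varphi$ obtained through $\pi$.

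Next I would invoke \cite[Proposition 21.3]{DGS}: for a dynamical system with the specification property the set of invariant probability measures carried by a single periodic orbit is dense, in the weak-$\ast$ topology $\sigma\bigl(C(\partial\mathds D),C(\partial\mathds D)^{\prime}\bigr)$, in the compact convex set $M_{\varphi}(\partial\mathds D)$ of \emph{all} $\varphi$-invariant probability measures. If one prefers to go through $\pi$, one applies this to $(\Sigma_{d},S)$ and pushes forward: the map $\pi_{*}\colon M_{S}(\Sigma_{d})\to M_{\varphi}(\partial\mathds D)$ is weak-$\ast$ continuous and onto (a factor map always carries the full set of invariant measures onto the full set of invariant measures) and sends a periodic-orbit measure to a periodic-orbit measure, so the image of the dense set of periodic-orbit measures of $\Sigma_{d}$ is dense in $\pi_{*}\bigl(M_{S}(\Sigma_{d})\bigr)=M_{\varphi}(\partial\mathds D)$. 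One then observes that a probability measure carried by a single periodic orbit of minimal period $m$ is ergodic, because $\varphi$ restricted to that finite orbit is a cyclic permutation, hence minimal and uniquely ergodic; thus, after dividing by $m$, these are precisely the periodic ergodic measures $\mu_{t}$ of~\eqref{eq20}.

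Finally, since the periodic ergodic measures are dense in the whole simplex $M_{\varphi}(\partial\mathds D)$, while the set $M_{\varphi}^{e}(\partial\mathds D)$ of ergodic $\varphi$-invariant measures satisfies $M_{\varphi}^{e}(\partial\mathds D)\subseteq M_{\varphi}(\partial\mathds D)$, every ergodic measure is a weak-$\ast$ limit of periodic ergodic measures; in other words the periodic ergodic measures are dense in $M_{\varphi}^{e}(\partial\mathds D)$ in the subspace topology, which is exactly the claim. The only point that calls for genuine care is the verification (or precise attribution) of the specification property of $\varphi$ — equivalently, carrying out the transfer through $\pi$ cleanly, using that $\pi$ is finite-to-one and $\pi_{*}$ surjective — together with the bookkeeping about the normalisation of $\mu_{t}$; the concluding passage ``density in the whole set implies density in a subset'' is soft.
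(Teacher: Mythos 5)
Your proposal is correct and follows essentially the same route as the paper, which offers no independent proof but derives the statement directly from \cite[Proposition 21.3]{DGS} (density of periodic-orbit measures under specification) together with \cite[Theorem 45]{KLO}; your additional remarks on the factor map from the full shift, the ergodicity of periodic-orbit measures, and the normalisation of $\mu_t$ in~(\ref{eq20}) are accurate fillings-in of details the paper leaves implicit.
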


The topological dynamics of the maps $z \rightarrow z^d, z \in \partial \mathds{D}$ was studied in seminal papers of Bullett and Sentenac, ~\cite{BS}, Blokh et al.,~\cite{BMMOP}, and Malaugh, ~\cite{Ma}.

For our purposes we need only a tiny part of the important results obtained in these papers (see Theorem~\ref{t9}). First we need the following definition given in~\cite[p. 456]{BS}.

\begin{definition} \label{d1}
  Let $A$ be a subset of the unit circle $\mathds{T}$ and $f$ be a map $A \rightarrow \mathds{T}$. We say that
  $f$ is order-preserving if for each triple $(a,b,c)$ in $A$ the triple $(f(a),f(b),f(c))$ lies in the same cyclic order around $\mathds{T}$, or else is degenerate (i.e. two or all three, of the points coincide).
\end{definition}

\begin{theorem} \label{t9} See~\cite{BS,BMMOP}.
  Let $d \geq 2$. There are $z^d$-periodic points $z_0$ and $z_i \in \mathds{T}, i \in \mathds{N}$ such that
  \begin{enumerate}
    \item $Tr(z_i) \cap Tr(z_j) = \emptyset, i,j \in \mathds{N}, i \neq j$, where $Tr(z_i)$ is the finite trajectory of $z_j$ under the action of the map $z \rightarrow z^d$.
    \item The map $z \rightarrow z^d$ is order preserving on $Tr(z_i), i \in \mathds{N}$.
    \item The map $z \rightarrow z^d$ is not order preserving on $Tr(z_0)$.
  \end{enumerate}
\end{theorem}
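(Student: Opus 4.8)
The plan is to sidestep the deep combinatorial machinery of \cite{BS,BMMOP} and instead write down the required periodic points by hand. Identify $\mathds{T}$ with $\mathds{R}/\mathds{Z}$ via $z = e^{2\pi i t}$, so that $z \mapsto z^d$ becomes $t \mapsto dt \bmod 1$. Under this identification the $z^d$-periodic points are precisely the $e^{2\pi i t}$ with $t$ rational of the form $k/(d^n-1)$ (for some $n \ge 1$), and an order-preserving bijection of a finite cyclically ordered set with at least three elements is, by a standard fact, exactly a cyclic rotation of that set. So ``$z \mapsto z^d$ is order-preserving on a periodic orbit'' (in the sense of Definition~\ref{d1}) means precisely that the induced permutation of the orbit, read in increasing cyclic order $s_0, s_1, \dots, s_{k-1}$, is a shift $s_j \mapsto s_{j+p}$ for some fixed $p$.

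For the order-preserving cycles I would take $z_i := e^{2\pi i t_i}$ with $t_i = 1/(d^{n_i}-1)$ for a strictly increasing sequence $n_i \ge 5$. Since $(d^n-1) \nmid (d^k-1)$ whenever $0 < k < n$, the point $t_i$ has minimal period exactly $n_i$, and its orbit in the $t$-coordinate is $\{1, d, d^2, \dots, d^{n_i-1}\}/(d^{n_i}-1)$; because $1 < d < \dots < d^{n_i-1} < d^{n_i}-1$ these points are already listed in increasing cyclic order, and $t \mapsto dt \bmod 1$ carries the $j$-th of them to the $(j+1)$-st (cyclically), i.e. it is the shift by one position. Hence $z \mapsto z^d$ is order-preserving on $Tr(z_i)$, which is (2); and since the $n_i$ are distinct the orbits have distinct cardinalities, so $Tr(z_i) \cap Tr(z_j) = \emptyset$ for $i \ne j$, which is (1).

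For (3) I would take $z_0 := e^{2\pi i t_0}$ with $t_0 = 1/(d^2+1)$. Here $d^4-1 = (d^2-1)(d^2+1)$ while $d^2+1$ divides none of $d-1$, $d^2-1$, $d^3-1$, so $t_0$ has minimal period $4$, and one computes its orbit to be $\{1,\, d,\, d^2+1-d,\, d^2\}/(d^2+1)$, which is in increasing order (the only non-obvious inequality, $d < d^2+1-d$, being $(d-1)^2 > 0$). A direct evaluation of multiplication by $d$ modulo $d^2+1$ then shows that the induced permutation of $(s_0, s_1, s_2, s_3)$ is the $4$-cycle $s_0 \mapsto s_1 \mapsto s_3 \mapsto s_2 \mapsto s_0$, which is neither the shift by $1$ nor the shift by $3$; hence $z \mapsto z^d$ is not order-preserving on $Tr(z_0)$. (Incidentally $Tr(z_0)$ has period $4$ and is therefore disjoint from every $Tr(z_i)$ as well, although (1) does not demand this.)

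I do not expect a real obstacle here: the whole argument reduces to the two elementary divisibility facts about the order of $d$ modulo $d^n-1$ and modulo $d^2+1$, together with the handful of inequalities that pin down the cyclic order of the two orbits. I would close with the remark that this recovers only the ``tiny part'' of \cite{BS,BMMOP} that we need; those papers, through the theory of rotation sets and rotation cycles for $z \mapsto z^d$, classify all periodic orbits on which the map is order-preserving — in that language (2) says the $z_i$ are rotation cycles and (3) says $z_0$ is not.
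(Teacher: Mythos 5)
Your argument is correct, and it is genuinely different from what the paper does: the paper offers no proof at all for Theorem~\ref{t9}, simply citing the rotation-set machinery of Bullett--Sentenac and Blokh et al., whereas you produce the required orbits explicitly. I checked the details: identifying $\mathds{T}$ with $\mathds{R}/\mathds{Z}$, the orbit of $1/(d^{n}-1)$ is $\{1,d,\dots,d^{n-1}\}/(d^{n}-1)$, already in increasing order and shifted by one position under multiplication by $d$, so it is a rotation of its cyclic order and hence order-preserving in the sense of Definition~\ref{d1}; distinct periods $n_i$ force pairwise disjointness, giving (1) and (2). For (3), the orbit of $1/(d^{2}+1)$ is $\{1,\,d,\,d^{2}+1-d,\,d^{2}\}/(d^{2}+1)$ in increasing order, with minimal period $4$ (since $d^{2}\equiv -1$ and $(d^{2}+1)\nmid(d\pm1)$), and the induced permutation $s_0\mapsto s_1\mapsto s_3\mapsto s_2\mapsto s_0$ is not a cyclic shift; e.g.\ the positively ordered triple $(s_0,s_1,s_3)$ maps to the negatively ordered triple $(s_1,s_3,s_2)$, so the map is not order-preserving there. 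The one standard fact you lean on --- that an order-preserving bijection of a finite subset of $\mathds{T}$ with at least three points must be a rotation of the cyclic order --- is correct and easy, and you use it in both the needed directions. What your route buys is a short, self-contained, purely arithmetic proof of exactly the ``tiny part'' of \cite{BS,BMMOP} the paper needs; what it gives up is the classification of \emph{all} order-preserving periodic orbits, which the paper does not use. One small remark: the proof of Theorem~\ref{t11} additionally extracts from (2) and (3) that $Tr(z_0)$ is disjoint from the closure of $\bigcup_i Tr(z_i)$; your explicit orbits satisfy this too (the closure only adds the points $e^{2\pi i/d^{k}}$, none of which lie on the period-$4$ orbit), so nothing downstream breaks.
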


 The following theorem follows from Theorem 2.1 and Corollary 2.3 in~\cite{KO}.

\begin{theorem} \label{t1} Let $(Tf)(z) = w(z)f(B(z), f \in \mathds{A}, z \in \mathds{D}$, where $B$ is a finite Blaschke product of degree $n \geq 2$ and $w \in \mathds{A}$.

Then, $\sigma_{usf}(T) = \sigma_{a.p.}(T)$ and the sets $\sigma(T)$ and $\sigma_{usf}(T)$ are rotation invariant.
\end{theorem}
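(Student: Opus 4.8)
The plan is to realize $T$ within the framework of weighted composition operators on uniform algebras from~\cite{KO}; in fact the statement is a transcription of~\cite[Theorem 2.1 and Corollary 2.3]{KO} to the present situation, so the work consists in checking the hypotheses and recalling why they yield the two conclusions. Recall that $\mathds{A}$ is a uniform algebra with maximal ideal space $\mathds{D}$ and Shilov boundary $\mathds{T}$, so that $\|f\|=\max_{\mathds{T}}|f|$ and each $f\in\mathds{A}$ is determined by $f|_{\mathds{T}}$. Since $B$ is a finite Blaschke product, $B\in\mathds{A}$, $B(\mathds{D})\subseteq\mathds{D}$, and the boundary map $B|_{\mathds{T}}\colon\mathds{T}\to\mathds{T}$ is a covering of degree $d$; as $d\ge2$ it is a surjection that is everywhere locally a homeomorphism but not injective (indeed $B'$ has no zeros on $\mathds{T}$, and $B|_{\mathds{T}}$ is exactly $d$-to-one). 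Writing $(C_Bf)(z)=f(B(z))$ and $(M_wf)(z)=w(z)f(z)$, we see that $C_B$ maps $\mathds{A}$ isometrically (but not onto) into itself, $T=M_wC_B$ is bounded, and the pair $(w,B)$ satisfies the hypotheses of~\cite{KO}. I treat the two assertions in turn.

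\textbf{(1)} $\sigma_{usf}(T)=\sigma_{a.p.}(T)$. The inclusion $\sigma_{usf}(T)\subseteq\sigma_{a.p.}(T)$ holds for every bounded operator, so fix $\lambda\in\sigma_{a.p.}(T)$; I must produce a \emph{singular} normalized approximate eigensequence. The mechanism, following~\cite{KO}, has two steps. First one shows that any $g\in\mathds{A}$ with $\|g\|=1$ and $\|Tg-\lambda g\|<\varepsilon$ may be replaced by one whose mass sits near a finite segment of a $B|_{\mathds{T}}$-orbit; this localization rests on the recurrence structure of $B|_{\mathds{T}}$ and, together with the need to stay inside the rigid algebra $\mathds{A}$ (handled via the Rudin--Carleson extension theorem for closed subsets of $\mathds{T}$ of Lebesgue measure zero), is the step I expect to be the main obstacle. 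Second, using that $B|_{\mathds{T}}$ is $d$-to-one with $d\ge2$, one clones such a localized $g$ onto the $d$ branches of $B^{-1}$ — and iteratively onto the branches of $B^{-m}$ — of its orbit segment, obtaining for every $\varepsilon$ an arbitrarily large family $g_1,\dots,g_N\in\mathds{A}$ of unit vectors with $\|Tg_j-\lambda g_j\|\le C\varepsilon$ and $\|g_j-g_k\|\ge 1/2$ for $j\ne k$. Letting $\varepsilon\to0$ and diagonalizing yields a singular approximate eigensequence, so $\lambda\in\sigma_{usf}(T)$.

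\textbf{(2)} Rotation invariance. Fix $\theta\in\mathds{R}$ and $\lambda\in\sigma_{a.p.}(T)$ with $\lambda\ne0$ (the case $\lambda=0$ being trivial). Using the localization from (1), take a normalized approximate eigensequence $(g_n)$ with the mass of $g_n$ on a closed null set $E_n\subseteq\mathds{T}$ with $E_n\cap B(E_n)=\emptyset$ (which can be arranged unless the orbit segment reduces to a fixed point of $B|_{\mathds{T}}$, a case handled separately). By Rudin--Carleson choose $u_n\in\mathds{A}$ with $\|u_n\|=1$, $u_n\equiv1$ on $E_n$ and $u_n\equiv e^{i\theta}$ on $B(E_n)$. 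Then $u_ng_n$ is again (essentially) normalized and
\[
T(u_ng_n)-e^{i\theta}\lambda\,(u_ng_n)
=(u_n\circ B)\bigl(w\,(g_n\circ B)-\lambda g_n\bigr)
+\lambda g_n\bigl((u_n\circ B)-e^{i\theta}u_n\bigr),
\]
where the first summand tends to $0$ and the second, being supported off $E_n$ up to a small error, also tends to $0$. Hence $e^{i\theta}\lambda\in\sigma_{a.p.}(T)$, so $\sigma_{a.p.}(T)$, and with it $\sigma_{usf}(T)$, is rotation invariant. Finally $\sigma(T)=\sigma_{a.p.}(T)\cup\sigma_{su}(T)$, where $\sigma_{su}(T)=\{\lambda:T-\lambda\text{ is not surjective}\}=\sigma_{a.p.}(T')$; the dual analogue of the above — which is~\cite[Corollary 2.3]{KO} — shows $\sigma_{a.p.}(T')$ is rotation invariant as well, whence so is $\sigma(T)$. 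As indicated, the genuinely hard points throughout are the localization of approximate eigenfunctions against the recurrent dynamics of $B|_{\mathds{T}}$ and the requirement of staying inside $\mathds{A}$.
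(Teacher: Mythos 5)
The paper does not actually prove Theorem~\ref{t1}: it is imported wholesale, with the one-line justification that it ``follows from Theorem 2.1 and Corollary 2.3 in~\cite{KO}''. Your proposal correctly identifies this and, at the level of approach, agrees with the paper --- the real content is the reduction to the uniform-algebra results of~\cite{KO}, and your verification that the pair $(w,B)$ (with $B|_{\mathds{T}}$ a $d$-to-one covering of the Shilov boundary, $d\geq 2$) falls under their hypotheses is the only step the paper itself performs implicitly. Your added sketch of \emph{why} those results hold is broadly the right mechanism (non-injectivity of $B|_{\mathds{T}}$ produces many almost-disjoint clones of a localized approximate eigenfunction, hence singular sequences; a unimodular cocycle built by Rudin--Carleson rotates approximate eigenvalues), but it is a sketch: the two steps you yourself flag as the main obstacles --- localization of an arbitrary approximate eigensequence onto orbit segments, and carrying out the cloning inside the rigid algebra $\mathds{A}$ --- are named rather than executed, and they are precisely where the work of~\cite{KO} lives.

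One concrete point in part (2) needs repair. You require the mass of $g_n$ to sit on a closed set $E_n$ with $E_n\cap B(E_n)=\emptyset$. For $\lambda\neq 0$ this is in tension with $g_n$ being a normalized approximate eigenfunction: if $|g_n(x)|=\|g_n\|=1$ at $x\in E_n$, then $|w(x)|\,|g_n(B(x))|\approx|\lambda|$ forces $|g_n(B(x))|$ to be bounded below, so the set carrying the mass of $g_n$ is (approximately) forward invariant and cannot be disjoint from its image; the localized eigenfunctions one actually has (compare the tower function $G$ in Lemma~\ref{l1} of this paper) spread their mass over a whole orbit segment $\{x,B(x),\dots,B^N(x)\}$. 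The standard fix is to prescribe $u_n\equiv e^{ij\theta}$ on the $j$-th level of that segment (so that $u_n\circ B=e^{i\theta}u_n$ holds on all levels carrying mass, not just on a single base set), which is still a Rudin--Carleson interpolation on a finite, hence null, set. With that correction your identity for $T(u_ng_n)-e^{i\theta}\lambda u_ng_n$ gives the rotation invariance of $\sigma_{a.p.}(T)$, and your passage from there to rotation invariance of $\sigma(T)$ via $\sigma_{su}(T)=\sigma_{a.p.}(T')$ is sound.
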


In the following lemma, which might be of some independent interest, we use the notations
$\varphi^1 = \varphi$, $\varphi^n = \varphi (\varphi^{n-1}), n \geq 2$,
$w_1 = w$,  and $w_n = w(w \circ \varphi)...(w \circ \varphi^{n-1}), n \geq 2$.

\begin{lemma} \label{l1}   Let $K$ be a compact Hausdorff space and $\varphi$ be an open continuous map of $K$ onto itself. Let $w \in C(K)$ and $T = wT_\varphi$. Let $\lambda \in \mathds{C}, \lambda \neq 0$ and $\lambda \in \sigma_{a.p.}(T)$.

The following conditions are equivalent.

(1) $\lambda \in \sigma_{usf}(T) \setminus \{0\}$.

 (2) There is a $k \in \partial \mathds{D}$ such that for any $m, n \geq 0$ and for any $e \in \partial \mathds{D}$ we have
  
  \begin{equation}\label{eq5}
    \varphi^m(e) = \varphi^n(k) \Rightarrow \Big{|}\frac{w_m(e)}{\lambda^m}\Big{|} \leq \Big{|}\frac{w_n(k)}{\lambda^n}\Big{|}.
  \end{equation}
 \end{lemma}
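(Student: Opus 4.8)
The plan is to prove the two implications separately, using throughout the formula $(T^{j}f)(x)=w_{j}(x)f(\varphi^{j}(x))$ together with the factorization
\[
T^{j}-\lambda^{j}=\Bigl(\sum_{i=0}^{j-1}\lambda^{\,j-1-i}T^{i}\Bigr)(T-\lambda),
\]
so that $\|x_{n}\|=1$, $Tx_{n}-\lambda x_{n}\to 0$ force $T^{j}x_{n}-\lambda^{j}x_{n}\to 0$ for every fixed $j\ge 0$. I would also record the two elementary consequences of $(\ref{eq5})$: taking $m=0$ (so $e=\varphi^{n}(k)$) gives $|w_{n}(k)|\ge|\lambda|^{n}$ for all $n$, hence $w$ has no zeros on the forward orbit of $k$ and the normalized cocycle $r_{n}:=|w_{n}(k)|/|\lambda|^{n}$ satisfies $r_{n}\ge 1$; and if $k$ is $\varphi$-periodic of period $p$, then taking $e=k$, $m=p$, $n=0$ forces $r_{p}=1$, i.e. $|w_{p}(k)|=|\lambda|^{p}$ (and similarly on the cycle of an eventually periodic $k$).

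For $(1)\Rightarrow(2)$ I would argue from an approximate-eigenvector sequence $x_{\ell}$, $\|x_{\ell}\|=1$, $Tx_{\ell}-\lambda x_{\ell}\to 0$. Pick $t_{\ell}\in K$ with $|x_{\ell}(t_{\ell})|\to 1$ and pass to a subnet with $t_{\ell}\to k$ and $x_{\ell}(t_{\ell})\to\alpha$, $|\alpha|=1$. Evaluating $T^{n}x_{\ell}-\lambda^{n}x_{\ell}\to 0$ at $t_{\ell}$ and using continuity of $w_{n}$ gives $w_{n}(t_{\ell})\,x_{\ell}(\varphi^{n}(t_{\ell}))\to\lambda^{n}\alpha$; in particular $w_{n}(k)\ne 0$ and $x_{\ell}(\varphi^{n}(t_{\ell}))\to\lambda^{n}\alpha/w_{n}(k)$. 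Now suppose $\varphi^{m}(e)=\varphi^{n}(k)$. Since $\varphi$ is open, $\varphi^{m}$ is an open (and onto) map, so from $\varphi^{n}(t_{\ell})\to\varphi^{n}(k)=\varphi^{m}(e)$ we may lift (after a further subnet) to $s_{\ell}\to e$ with $\varphi^{m}(s_{\ell})=\varphi^{n}(t_{\ell})$. Evaluating $T^{m}x_{\ell}-\lambda^{m}x_{\ell}\to 0$ at $s_{\ell}$ gives $\lambda^{m}x_{\ell}(s_{\ell})=w_{m}(s_{\ell})\,x_{\ell}(\varphi^{n}(t_{\ell}))+o(1)\to w_{m}(e)\,\lambda^{n}\alpha/w_{n}(k)$, hence $|x_{\ell}(s_{\ell})|\to\frac{|w_{m}(e)|}{|\lambda|^{m}}\cdot\frac{|\lambda|^{n}}{|w_{n}(k)|}$. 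Since $|x_{\ell}(s_{\ell})|\le\|x_{\ell}\|=1$, this is exactly $(\ref{eq5})$ for this $k$. (This is the easy direction; openness of $\varphi$ enters only here, and only to lift backward along $\varphi^{m}$.)

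For $(2)\Rightarrow(1)$ I would construct a singular approximate-eigenvector sequence supported near the grand orbit $GO(k)=\{y\in K:\varphi^{m}(y)=\varphi^{n}(k)\text{ for some }m,n\ge 0\}$. On $GO(k)$ set $h(y)=\dfrac{w_{m}(y)}{\lambda^{m}}\cdot\dfrac{\lambda^{n}}{w_{n}(k)}$ whenever $\varphi^{m}(y)=\varphi^{n}(k)$; a short cocycle computation (with the relation $r_{p}=1$ above handling the eventually periodic case) shows $h$ is well defined, $h(\varphi(y))=\lambda h(y)/w(y)$, $h(k)=1$, and — this is precisely where $(\ref{eq5})$ is used in full strength — $|h|\le 1$ on $GO(k)$. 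On each finite union $F_{N}$ of the closed sets $(\varphi^{m})^{-1}(\varphi^{n}(k))$, $h$ is continuous by the pasting lemma. I would then multiply $h$ by a slowly varying cut-off $\psi_{N}$ depending only on the ``level'' $\ell(y)=n-m$ (well defined on $GO(k)$ and increased by $1$ under $\varphi$), equal to $1$ on a long central window and tapering to $0$ over windows of length $\sim N$ on either side; since $\psi_{N}(\varphi(y))-\psi_{N}(y)$ is the discrete increment of $\psi_{N}$, one gets $\bigl\|(T-\lambda)(\psi_{N}h)\bigr\|\le C|\lambda|/N$ on $GO(k)$. Extending $\psi_{N}h$ to $K$ by Tietze, truncated to a small enough neighbourhood of its support, yields $x_{N}\in C(K)$ with $\|x_{N}\|=1$ (as $|x_{N}(k)|=1$) and $(T-\lambda)x_{N}\to 0$, and $(x_{N})$ is singular because its peak set moves without accumulating — along the forward orbit of $k$ when $k$ is not eventually periodic, and deeper into the backward tree of the cycle when it is. The \textbf{main obstacle} is this last direction: turning $\psi_{N}h$ into a genuinely singular unit sequence with $(T-\lambda)x_{N}\to 0$ is delicate because (a) $GO(k)$ need not be closed, so one must work with the finite pieces $F_{N}$ and control the Tietze extension off $GO(k)$ so it contributes nothing to $(T-\lambda)x_{N}$ — this uses openness together with uniform continuity of $\varphi$ so the composition cannot ``see'' the truncation region; and (b) when $r_{n}\to\infty$ the values of $h$ along the forward orbit decay, so $\|\psi_{N}h\|$ can be small and one must choose the window (or localise in the backward tree) carefully, which is exactly where the standing hypothesis $\lambda\in\sigma_{a.p.}(T)$ is needed: it supplies the raw approximate eigenvectors that $(\ref{eq5})$ then lets us redistribute along $GO(k)$.
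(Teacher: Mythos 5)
Your proof of $(1)\Rightarrow(2)$ is correct and is essentially the paper's argument: pick peak points $t_\ell$ of an approximate eigenvector sequence, pass to a subnet with $t_\ell\to k$, use openness of $\varphi^m$ to lift $\varphi^n(t_\ell)\to\varphi^n(k)=\varphi^m(e)$ to points $s_\ell\to e$ with $\varphi^m(s_\ell)=\varphi^n(t_\ell)$, and compare $\limsup|x_\ell(s_\ell)|\le 1=\lim|x_\ell(t_\ell)|$. (The paper first reduces to metrizable $K$ so as to work with sequences; your subnet version is equivalent.)

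The direction $(2)\Rightarrow(1)$ is where the genuine gap lies, and you have located it yourself. Defining the cocycle function $h$ on the grand orbit and damping it by a slowly varying profile in the level is the right idea, but the step ``extend $\psi_N h$ from the closed set $F_N$ to $K$ by Tietze so that $(T-\lambda)x_N$ remains small'' is not justified, and openness plus uniform continuity of $\varphi$ do not supply it: an arbitrary continuous extension satisfies no approximate eigen-equation at points of $W\setminus F_N$ or of $\varphi^{-1}(W)\setminus W$, where $|w(y)x_N(\varphi(y))-\lambda x_N(y)|$ can be of order $\|w\|$. The paper's construction removes this obstacle at the source: it chooses a small neighbourhood $U$ of $\varphi^n(k)$ so that the closures of $\varphi^{-j}(U)$, $0\le j\le 2n+1$, are pairwise disjoint and $|w_{2n+1}|\le 2|w_n(k)|$ on $\varphi^{-2n-1}(U)$, takes a bump $g=f/w_n(k)$ supported in $U$, and sets $G=\sum_{j=0}^{2n}(1-1/\sqrt{n})^{|n-j|}T^jg$; the iterates $T^jg$ are automatically continuous, supported in $\varphi^{-j}(U)$, and carry exactly your cocycle weights, so $G(k)=1$ and $\|TG-G\|\le\bigl(3(1-1/\sqrt{n})^n+1/\sqrt{n}\bigr)\|G\|$ follow from disjointness of supports alone, with no extension problem. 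A second gap is the eventually periodic case: there your level function $\ell(y)=n-m$ is not well defined on $GO(k)$, and neither is $h$, since going once around the cycle multiplies $h$ by the unimodular but not necessarily trivial factor $w_p(\cdot)/\lambda^p$ (the identity $|w_p|=|\lambda|^p$ fixes only the modulus); the paper sidesteps this by replacing $k$ with a point $k_n$ deep in the backward tree with $|w_n(k_n)|\ge 1/2$, so that only a finite non-wrapping piece of the tree is used. Finally, your singularity argument (``the peak set moves without accumulating'') does not apply as written, since every $x_N$ peaks at the same point $k$; to be fair, the paper's own proof also stops at $\lambda\in\sigma_{a.p.}(T)$ at this stage.
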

 
 \begin{proof} $(1) \Rightarrow (2)$
 
  Assume first that the compact space $K$ is metrizable. Without loss of generality we can assume that $\lambda = 1$. Let $f_p \in C(K), p \in \mathds{N}$, be such that $\|f_p\| =1$ and $Tf_p - f_p \rightarrow 0$. Let $k_p \in K$ be such that $|f_p(k_p)|=1$. We can assume without loss of generality that
  the sequence $k_p$ converges to a $k \in K$.
  
  Let $e \in K$ and $m,n \geq 0$ be such that $\varphi^m(e) = \varphi^n(k)$. It is immediate to see that because the map $\varphi$ is open, there is a subsequence $k_{p_q}$ of $k_p$ and the points $e_q \in K$ such that the sequence $e_q$ converges to $e$ and $\varphi^m(e_q) = \varphi^n(k_{p_q})$.
  
  Next notice that
  
  \begin{equation}\label{eq21}
        \begin{split}
     & T^mf_{p_q}(e_q) = w_m(e_q)f_{p_q}(\varphi^m(e_q)) - f_{p_q}(e_q) \rightarrow 0, \\
     & T^nf_{p_q}(k_{p_q}) = w_n(k_{p_q})f_{p_q}(\varphi^n(k_{p_q})) - f_{p_q}(k_{p_q}) \rightarrow 0.
    \end{split}
  \end{equation}
  Recalling that $\varphi^m(e_q) = \varphi^n(k_{p_q})$ and that $|f_{p_q}(e_q)| \leq |f_{p_q}(k_{p_q})|=1$ we see that it follows from~(\ref{eq21}) that $|w_m(e)| \leq |w_n(k)|$, and~(\ref{eq5}) is proved.
  
   The case of an arbitrary compact Hausdorff space $K$ follows easily. Indeed, it is sufficient to consider the separable subalgebra of $C(K)$ generated by the functions $T^if_n, n \in \mathds{N}, i \geq 0$, and their complex conjugates.
   
   $(2) \Rightarrow (1)$ We can assume without loss of generality that $\lambda =1$. Assume first that the point $k$ is not eventually $\varphi$-periodic. Fix an $n \in \mathds{N}$ and let $u = \varphi^n(k)$. It follows 
  from~(\ref{eq5}) that there is an open neighborhood $U$ of $u$ such that
  
  \begin{equation}\label{eq9}
    cl\varphi^{-i}(U) \cap cl\varphi^{-j}(U) = \emptyset, \; 0 \leq i < j \leq 2n+1
  \end{equation}
  and
    \begin{equation}\label{eq10}
    |w_{2n+1}(v)| \leq 2|w_n(k)|, \; v \in \varphi^{-2n-1}(U).
  \end{equation}
  
   Let $f \in C(K)$ be such that 
  $f(u) = \|f\| =1$ and $f \equiv 0$ on $K \setminus U$. Let $g = \frac{1}{w_n(k)}f$ and let
  
  \begin{equation}\label{eq7}
   G = \sum \limits_{j = 0}^{2n} \Big{(} 1 - \frac{1}{\sqrt{n}} \Big{)}^{|n - j|} T^jg
  \end{equation}
   Then,
   
   \begin{equation}\label{eq8}
   \begin{split}
    & TG - G = -\Big{(} 1 - \frac{1}{\sqrt{n}} \Big{)}^ng - \frac{1}{\sqrt{n}}\sum \limits_{j=1}^n \Big{(} 1 - \frac{1}{\sqrt{n}} \Big{)}^{|n - j|} T^jg  \\
      & + \frac{1}{\sqrt{n}} \sum \limits_{j= n+1}^{2n} \Big{(} 1 - \frac{1}{\sqrt{n}} \Big{)}^{|n - j|} T^jg + \Big{(} 1 - \frac{1}{\sqrt{n}} \Big{)}^nT^{2n+1}g.\\
    \end{split}
   \end{equation}
   Notice that $\|G\| \geq 1$. Indeed, it follows from~(\ref{eq9})  and~(\ref{eq7}) that $G(k) = 1$. Next, from~(\ref{eq8}) and~(\ref{eq9})\ we have
   
   \begin{equation}\label{eq11}
     \|TG - G\| \leq \Big{(} 1 - \frac{1}{\sqrt{n}} \Big{)}^n + \frac{1}{\sqrt{n}}\|G\| + \Big{(} 1 - \frac{1}{\sqrt{n}} \Big{)}^n \|T^{2n+1}g\|.
   \end{equation}
   It remains to notice that
   
   \begin{equation}\label{eq12}
   T^{2n+1}g(s) =  \left\{
       \begin{array}{ll}
         w_{2n+1}(s)g(\varphi^{2n+1}(s)), & \hbox{if $s \in \varphi^{-2n-1}(U)$;} \\
         0, & \hbox{otherwise.}
       \end{array}
     \right.
   \end{equation}
Finally, from~(\ref{eq11}),~(\ref{eq12}), and~(\ref{eq10}) we get
\begin{equation}\label{eq13}
  \|TG - G\| \leq \Big{(}  3\Big{(} 1 - \frac{1}{\sqrt{n}} \Big{)}^n + \frac{1}{\sqrt{n}} \Big{)}\|G\|.
\end{equation}
Because $n$ is arbitrary large, $1 \in \sigma_{a.p.}(T)$.

In the case when the point $k$ is eventually periodic we fix an $n \in \mathds{N}$. The conditions of the lemma allow us to choose a point $k_n$ and its open neighborhood $U_n$ such that~(\ref{eq9}) and~(\ref{eq10}) are satisfied, and $|w_n(k_n)| \geq 1/2$. Then we proceed as in the previous part of the proof.
 \end{proof}  
 
  As the following simple example shows the statement of Lemma~\ref{l1} is in general not true without the assumption that the map $\varphi$ is open.
    
  \begin{example} \label{e1} Let $K = \{0\} \cup \{\frac{1}{n}, n \in \mathds{N}\} \cup [1,2]$.
  
  Let $\varphi(0) = 0, \varphi(\frac{1}{n}) = \frac{1}{n-1}, n=2,3, \ldots$, and $\varphi(x) = x, x \in [1,2]$.
  
  Let $w(x) =x, x \in [1,2]$ and $w(k) = 2, k \in K \setminus [1,2]$.
  
  Let $T = wT_\varphi$ be the corresponding weighted composition operator on $C(K)$. It is immediate to see that $1 \in \sigma_{a.p.}(T)$, but at no point $k \in K$ the inequalities $|w_n(k)| \geq 1$ and $t \in \varphi^{-n}(\{k\}) \Rightarrow |w_n(t)| \leq 1$ are satisfied.    
  \end{example} 
  
  We will need the following corollary of Lemma~\ref{l1}.
    
  \begin{corollary} \label{c1}
    Let $K$ be a compact Hausdorff space, $\varphi$ be a continuous open map of $K$ onto itself, $w \in C(K)$,
    and $(Tf)(k) = w(k)f(\varphi(k)), f \in C(K), k \in K$. Let $\lambda \in \mathds{C} \setminus \{0\}$ and assume that there is a point $k \in K$ such that
    \begin{enumerate}
      \item $|w_n(k)| \geq |\lambda|^n, n \in \mathds{N}$;
      \item if $s \in \varphi^{-1}(k)$ and the point $s$ is not $\varphi$-periodic than $w(s) =0$.
          \end{enumerate}
          Then $\lambda \in \sigma_{a.p.}(T)$.
  \end{corollary}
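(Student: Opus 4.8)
The plan is to reduce to $\lambda=1$ (replace $w$ by $w/\lambda$; then (1) becomes $|w_n(k)|\ge 1$ for all $n$, (2) is unchanged) and then to reuse, essentially verbatim, the machinery of the implication $(2)\Rightarrow(1)$ in Lemma~\ref{l1}. For a large $n$ I pick a ``centre'' $u_n$ lying over $\varphi^n(k)$, a bump $f_n$ with $\|f_n\|=1$ supported in a small neighbourhood $U_n$ of $u_n$, set $g_n=f_n/w_n(k)$, and form
\[
G_n=\sum_{j=0}^{2n}\Bigl(1-\tfrac1{\sqrt n}\Bigr)^{|n-j|}T^jg_n .
\]
Running the telescoping identity~(\ref{eq8}) and the estimates that follow it, everything comes down to choosing $U_n$ so that the disjointness~(\ref{eq9}) of the closures $\mathrm{cl}\,\varphi^{-i}(U_n)$, $0\le i\le 2n+1$, holds, and so that the single ``overshoot'' term $T^{2n+1}g_n$ has norm $\le C$; granting these, (\ref{eq13}) gives $\|TG_n-G_n\|\le c_n\|G_n\|$ with $c_n\to0$ while $\|G_n\|\ge 1$, whence $1\in\sigma_{a.p.}(T)$.

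The disjointness is the easy half. As in Lemma~\ref{l1}, (\ref{eq9}) is automatic once the centre $u_n$ is not $\varphi$-periodic, since a common point of $\mathrm{cl}\,\varphi^{-i}(U_n)$ and $\mathrm{cl}\,\varphi^{-j}(U_n)$ for all small $U_n$ would force $\varphi^{j-i}(u_n)=u_n$. If $k$ is not eventually periodic I take $u_n=\varphi^n(k)$. If $k$ is eventually periodic, $\varphi^n(k)$ is itself periodic for large $n$ and cannot serve; instead I slide the centre to a nearby non-periodic point $u_n$, using openness of $\varphi$ together with condition (1) along the terminal cycle (which keeps $|w_n|$ from collapsing), so that a suitable preimage $k_n$ still has $|w_n(k_n)|\ge 1/2$ -- this is the device already invoked at the end of the proof of Lemma~\ref{l1}.

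The substantive point, and the only place where hypothesis (2) is genuinely used, is the bound on
\[
\|T^{2n+1}g_n\|=\frac1{|w_n(k)|}\,\sup\bigl\{\,|w_{2n+1}(v)| : \varphi^{2n+1}(v)\in U_n\,\bigr\}.
\]
For such a $v$, $\varphi^{2n+1}(v)$ is as close as we wish to $\varphi^n(k)$, so along the orbit $v,\varphi(v),\dots,\varphi^{2n+1}(v)$ one is pushed, after about $n$ steps, through the fibre $\varphi^{-1}(\varphi^n(k))$ and hence near $\varphi^{-1}(k)$. By (2), $w$ vanishes on the non-periodic part of $\varphi^{-1}(k)$, and the only periodic preimage of $k$ (present only when $k$ is eventually periodic) is $\varphi^{m-1}(k)$, at which $|w(\varphi^{m-1}(k))|$ is tied to $|w_m(k)|\ge 1$. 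So after shrinking $U_n$ enough that $|w|$ is small on a neighbourhood of the non-periodic part of $\varphi^{-1}(k)$ and of its first $2n+1$ backward iterates, the product $w_{2n+1}(v)$ stays below $2|w_n(k)|$, i.e. $\|T^{2n+1}g_n\|\le 2$ -- the analogue of~(\ref{eq10}).

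The step I expect to be the real obstacle is precisely this overshoot estimate: one must rule out, uniformly in $n$, contributions to $w_{2n+1}(v)$ coming from orbit segments of $v$ that skirt the fibre of $k$ and on which $|w|$ may be large, and then dovetail the choice of $U_n$ (which must shrink with $n$) with the disjointness~(\ref{eq9}) and with the normalisation $\|g_n\|=1/|w_n(k)|$. The eventually-periodic bookkeeping -- locating the perturbed centres $u_n$ and the auxiliary points $k_n$, and checking that the vanishing of $w$ at the non-periodic preimages of $k$ genuinely isolates the single periodic preimage $\varphi^{m-1}(k)$ -- is the other delicate ingredient; once both are in place the conclusion is immediate from the quantitative estimate~(\ref{eq13}).
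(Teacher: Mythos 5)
Your reduction to $\lambda=1$ and the telescoping machinery are fine, but the step you yourself single out as the obstacle --- the overshoot bound $\|T^{2n+1}g_n\|\le 2$ --- is exactly where the argument breaks, and it cannot be repaired, because the corollary as printed is false. Hypothesis (2) controls $w$ only on the fibre $\varphi^{-1}(\{k\})$, whereas a point $v$ with $\varphi^{2n+1}(v)\in U_n$ has $\varphi^{2n}(v)$ near the fibre $\varphi^{-1}(\{\varphi^{n}(k)\})$; this is a different set (it contains $\varphi^{n-1}(k)$ together with points having nothing to do with $k$), so most backward branches feeding into $U_n$ never come near $\varphi^{-1}(\{k\})$ and carry weights as large as $\|w\|^{2n+1}$. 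Worse, even in the most favourable case where $k$ is a fixed point, so that every such orbit really is funnelled through $\varphi^{-1}(\{k\})$ and (2) annihilates all branches except the one sitting at $k$, the surviving branch gives $\sup_v|w_{2n+1}(v)|=|w(k)|^{2n+1}$, and $|w(k)|^{2n+1}/|w_n(k)|=|w(k)|^{n+1}\to\infty$ as soon as $|w(k)|>1=|\lambda|$ --- a situation that hypothesis (1), being only an inequality, explicitly permits. This is not a defect of your construction alone: take $K=\{k\}$, $\varphi=\mathrm{id}$, $w\equiv 1$; then $T$ is the identity, (1) and (2) hold for every $0<|\lambda|\le 1$, yet $\sigma_{a.p.}(T)=\{1\}$. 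The paper itself contains a nontrivial counterexample: in Example~\ref{e7} the point $k=1$ and $\lambda=1/4$ satisfy (1) and (2), while $\sigma_{a.p.}(T)=\sigma_{usf}(T)=(1/2)\mathds{T}\cup\mathds{T}$ does not contain $1/4$.

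What is actually used in Theorem~\ref{t6} and Example~\ref{e7}, and what you should prove instead, is the corrected statement: $k$ is $\varphi$-periodic of period $m$, $|w_n(k)|\ge|\lambda|^n$ for all $n$ \emph{and} $|w_m(k)|=|\lambda|^m$, and $w$ vanishes at every non-periodic point of $\varphi^{-1}(C)$, where $C=\{k,\varphi(k),\dots,\varphi^{m-1}(k)\}$ is the whole cycle, not just the fibre of $k$. Under these hypotheses no separate overshoot analysis is needed, because condition (2) of Lemma~\ref{l1} holds with witness $k$ and the lemma's own construction applies verbatim. Indeed, suppose $\varphi^{m'}(e)=\varphi^{n'}(k)$ with $m'\ge 1$. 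If the orbit of $e$ enters $C$ through a non-periodic preimage, then $w_{m'}(e)=0$. Otherwise $e=\varphi^{j}(k)$ for some $j$, hence $j+m'\equiv n'\pmod m$ and
\begin{equation*}
\frac{|w_{m'}(e)|}{|\lambda|^{m'}}=\frac{|w_{j+m'}(k)|}{|\lambda|^{j+m'}}\cdot\frac{|\lambda|^{j}}{|w_{j}(k)|}
=\frac{|w_{n'}(k)|}{|\lambda|^{n'}}\cdot\frac{|\lambda|^{j}}{|w_{j}(k)|}\le\frac{|w_{n'}(k)|}{|\lambda|^{n'}},
\end{equation*}
using $|w_m(k)|=|\lambda|^m$ for the middle equality and $|w_j(k)|\ge|\lambda|^j$ for the last inequality; the case $m'=0$ is hypothesis (1). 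You should state and prove this version, and note that your device of sliding the centre off the terminal cycle is then subsumed by the eventually-periodic case already treated (however tersely) at the end of the proof of Lemma~\ref{l1}.
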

  
  The next simple lemma complements the statements of Lemma~\ref{l1} and Corollary~\ref{c1}.

  \begin{lemma} \label{l3} Let $K$ be a metrizable Hausdorff compact space, $\varphi$ be a continuous open map of $K$ onto itself, $w \in C(K)$, and $T$ be the weighted isometry on $C(K)$ defined as
  
  \begin{equation*}
    (Tf)(k) = w(k)f(\varphi(k)), f \in C(K), k \in K.
  \end{equation*}
  The following conditions are equivalent
  
  $(1) \; 0 \in \sigma_{a.p.}(T)$.
  
  $(2)$ There is a $k \in K$ such that $w \equiv 0$ on $\varphi^{-1}(\{k\})$.    
  \end{lemma}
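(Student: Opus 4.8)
The plan is to prove the two implications separately; openness of $\varphi$ is needed only for $(1)\Rightarrow(2)$, while $(2)\Rightarrow(1)$ uses only that a continuous map of a compact Hausdorff space into itself is closed. Throughout I fix a compatible metric $\rho$ on $K$.

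\textbf{$(2)\Rightarrow(1)$.} Suppose $w$ vanishes on the compact set $F=\varphi^{-1}(\{k\})$, and fix $\varepsilon>0$. Since $w$ is continuous and $w|_F\equiv 0$, the open set $V=\{x\in K:|w(x)|<\varepsilon\}$ contains $F$. Because $\varphi$ is a closed map, $\varphi(K\setminus V)$ is closed; it cannot contain $k$ (else some point of $\varphi^{-1}(\{k\})\subseteq F\subseteq V$ would lie in $K\setminus V$), so $U:=K\setminus\varphi(K\setminus V)$ is an open neighborhood of $k$ with $\varphi^{-1}(U)\subseteq V$. By Urysohn's lemma pick $f\in C(K)$ with $0\le f\le 1$, $f(k)=1$, and $f\equiv 0$ on $K\setminus U$. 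If $(Tf)(x)=w(x)f(\varphi(x))\neq 0$ for some $x$, then $\varphi(x)\in U$, hence $x\in V$ and $|w(x)|<\varepsilon$; thus $\|Tf\|\le\varepsilon$ while $\|f\|=1$. Letting $\varepsilon\downarrow 0$ yields $0\in\sigma_{a.p.}(T)$.

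\textbf{$(1)\Rightarrow(2)$.} Choose $f_p\in C(K)$ with $\|f_p\|=1$ and $Tf_p\to 0$, and points $k_p\in K$ with $|f_p(k_p)|=1$. By compactness and metrizability of $K$ we may pass to a subsequence (still written $f_p$, $k_p$) with $k_p\to k$. I claim $w\equiv 0$ on $\varphi^{-1}(\{k\})$. Let $t\in K$ with $\varphi(t)=k$. For each $m$, the ball $B(t,1/m)$ is open, so $\varphi(B(t,1/m))$ is open and contains $\varphi(t)=k$, hence contains $k_p$ for all $p$ large enough; a diagonal argument then produces points $t_p$ with $\rho(t_p,t)\to 0$ and $\varphi(t_p)=k_p$. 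Since $|(Tf_p)(t_p)|=|w(t_p)|\,|f_p(\varphi(t_p))|=|w(t_p)|\,|f_p(k_p)|=|w(t_p)|$ and $\|Tf_p\|\to 0$, we get $w(t_p)\to 0$, and by continuity $w(t)=0$. As $t$ was an arbitrary point of $\varphi^{-1}(\{k\})$, $(2)$ holds.

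The only genuinely nontrivial ingredient is the lifting step in $(1)\Rightarrow(2)$: converting $k_p\to k=\varphi(t)$ into a sequence $t_p\to t$ with $\varphi(t_p)=k_p$. This is precisely where both the openness of $\varphi$ and the first countability (metrizability) of $K$ are used; for a non-metrizable compact space one would instead argue with nets, or reduce to the metrizable case as in the proof of Lemma~\ref{l1}. That openness cannot simply be dropped is illustrated by Example~\ref{e1}. Everything else is a standard application of Urysohn's lemma and the closedness of continuous maps on compacta.
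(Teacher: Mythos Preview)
Your proof is correct and follows essentially the same approach as the paper's: for $(1)\Rightarrow(2)$ you both pick peak points $k_p\to k$ and use openness of $\varphi$ to lift to preimages, and for $(2)\Rightarrow(1)$ you both build bump functions concentrated near $k$; your version simply fills in more detail (in particular, your explicit use of closedness of $\varphi$ to produce the neighborhood $U$ with $\varphi^{-1}(U)\subseteq V$ makes rigorous what the paper leaves as ``Then $Tf_n\to 0$''). One minor quibble with your closing commentary: Example~\ref{e1} was designed to show that openness is needed in Lemma~\ref{l1}, not in Lemma~\ref{l3}; in that example $|w|\geq 1$ everywhere and $\varphi$ is onto, so $T$ is bounded below and both conditions (1) and (2) of Lemma~\ref{l3} fail, meaning the equivalence survives there.
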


  \begin{proof}
    $(1) \Rightarrow (2)$. Let $f_n \in C(K), \|f_n\| = f_n(k_n) =1$ and $Tf_n \rightarrow 0$. Let 
    $u \in \varphi^{-1}(\{k_n\}$ then $(Tf_n)(u) = w(u)$ and therefore 
        
    \begin{equation}\label{eq25}
      \max \limits_{v \in \varphi^{-1}(\{k_n\})} |w(v)| \mathop \rightarrow \limits_{n \to \infty} 0.
    \end{equation}
    We can assume without loss of generality that $k_n \rightarrow k \in K$.
    It follows from~(\ref{eq25}) and the fact that $\varphi$ is open that $w \equiv 0$ on $\varphi^{-1}(\{k\})$.
    
    \noindent $(2) \Rightarrow (1)$. Assume (2) and assume that $d$ is a metric on $K$. Let
    $F_n = \{t \in K : \; d(t,k) \leq 1/n, n \in \mathds{N}\}$. Let $f_n \in C(K)$ be such that 
    $\|f_n\| = 1 = f_n(k)$ and $supp f_n \subseteq F_n$. Then $Tf_n \rightarrow 0$. 
  \end{proof}

We also need the following lemma.

\begin{lemma} \label{l2} Let $\varphi(z) = z^d, d \geq 2$ and let $u,v \in \partial \mathds{D}$. Then there is a sequence
$w_n$ such that $w_1 = u, \varphi(w_n) = w_{n-1}, n \geq 2$, and $w_n \rightarrow v$.
\end{lemma}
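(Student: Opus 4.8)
\emph{Set-up via angle coordinates.} The plan is to pass to the universal cover and read $\varphi$ as a linear map on angles. Write $u=e^{2\pi i\alpha}$ and $v=e^{2\pi i\beta}$ with $\alpha,\beta\in[0,1)$, so that under $\theta\mapsto e^{2\pi i\theta}$ the map $\varphi(z)=z^d$ becomes $\theta\mapsto d\theta \pmod 1$. In these coordinates an $m$-fold preimage of $u$ is any $\theta$ with $d^m\theta\equiv\alpha\pmod 1$, i.e.\ a point of the finite set
\[
P_m=\Big\{\frac{\alpha+j}{d^m}:j=0,1,\dots,d^m-1\Big\},
\]
which is a coset of the group of $d^m$-th roots of unity and hence consists of exactly $d^m$ points spaced $1/d^m$ apart around the circle.

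\emph{The construction.} The key observation is that $P_m$ is $\tfrac{1}{2d^m}$-dense: every point of $\partial\mathds{D}$ lies within arc-distance $\pi/d^m$ of some element of $P_m$. I would therefore, for each $n\ge 1$, take $w_n$ to be a point of $P_{n-1}$ nearest to $v$ (with $P_0=\{u\}$, so that $w_1=u$). Then $\varphi^{\,n-1}(w_n)=u$, and $|w_n-v|\le \pi/d^{\,n-1}$, so that $w_n\to v$; since $d\ge 2$ the convergence is in fact geometric. Thus every $w_n$ is an $(n-1)$-fold preimage of $u$ and the sequence converges to $v$, which is the assertion. The only routine points to settle are a clean selection rule and the harmless ties when $v$ is equidistant from two neighbouring points of $P_m$, resolved by an arbitrary tie-break.

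\emph{The main obstacle.} The genuine subtlety, and the reason a naive approach fails, is the distinction between following one branch of the backward tree and selecting independently chosen iterated preimages. If the relation $\varphi(w_n)=w_{n-1}$ is read as a single backward \emph{chain} with $w_n\to v$, then continuity of $\varphi$ forces $w_{n-1}=\varphi(w_n)\to\varphi(v)$ while also $w_{n-1}\to v$, whence $\varphi(v)=v$; a convergent chain can therefore only converge to one of the finitely many fixed points $z^{d-1}=1$. Full convergence to an \emph{arbitrary} $v$ is obtained not by pinning down one branch but by choosing at level $n$ the point of $P_{n-1}$ closest to $v$; equivalently, the content of the lemma is the density of $\bigcup_{m\ge 0}\varphi^{-m}(u)$ in $\partial\mathds{D}$, which the spacing estimate above makes immediate. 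I expect that recognizing this selection (rather than maintaining a fixed branch) is the crux, after which the estimate $|w_n-v|\le\pi/d^{\,n-1}$ finishes the argument at once.
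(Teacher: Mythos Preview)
Your diagnosis in the ``main obstacle'' paragraph is correct and decisive: if $\varphi(w_n)=w_{n-1}$ for all $n\ge 2$ and $w_n\to v$, then continuity gives $w_{n-1}=\varphi(w_n)\to\varphi(v)$ while the shifted sequence $w_{n-1}$ also tends to $v$, so $\varphi(v)=v$. Thus the lemma, read literally, can hold only when $v$ is one of the finitely many fixed points $v^{d-1}=1$. The paper's own proof (pass to the one-sided $d$-shift, where $\varphi$ is the left shift and preimages prepend a digit) is the same mechanism viewed symbolically and has the same limitation: a backward chain obtained by successively prepending digits $\beta_1,\beta_2,\dots$ to the expansion of $u$ converges in $\mathds{T}$ only if the target has an eventually constant expansion, i.e.\ is a fixed point. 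So neither your argument nor the paper's can establish the statement for arbitrary $v$, because for non-fixed $v$ it is false.

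That said, your proposed construction does \emph{not} prove the lemma as written: choosing $w_n$ to be the point of $P_{n-1}=\varphi^{-(n-1)}(\{u\})$ nearest to $v$ gives $\varphi^{\,n-1}(w_n)=u$ and $w_n\to v$, but there is no reason that $\varphi(w_n)=w_{n-1}$. What you prove is the density of $\bigcup_{m\ge 0}\varphi^{-m}(\{u\})$ in $\mathds{T}$, which is correct and useful, but it is a different statement. In the paper's application (the proof of Theorem~\ref{t4}) the chain condition is actually used: one needs the \emph{products} $w_{n-1}(k_n)=w(k_n)w(k_{n-1})\cdots w(k_2)$ along a single backward orbit to grow like $|w_m(t)|^{(n-1)/m}$, and mere density of preimages does not deliver that directly. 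The clean repair is to note that if $t$ has period $m$ then $t$ is a fixed point of $\varphi^m=z^{d^m}$; the fixed-point case of the lemma (which \emph{is} valid, by exactly your $P_n$-spacing argument carried out as a chain) then yields a $\varphi^m$-backward chain from $u$ converging to $t$, and refining it to a $\varphi$-chain gives a sequence whose residue-class subsequences converge to the points of the orbit of $t$. That suffices for the intended contradiction with~(\ref{eq23}).
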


\begin{proof} The statement of the lemma becomes obvious if we recall that the map $z \rightarrow z^d, z \in C(\partial \mathds{D})$ is topologically equivalent to the map $t \rightarrow dt, t \in \mathds{R}/\mathds{Z}$ (mod 1). Indeed, if we identify an $t \in [0,1)$ with the sequence $\{ \alpha_i, i \in \mathds{N}\}$, where 
$\alpha_i \in \{0,1, \ldots d-1\}$ and $t = \sum \limits_{i=1}^\infty \alpha_i/d^i$ (with the usual agreement that the rational numbers are represented in the unique way), then the map $t \rightarrow dt$ is the backward shift on the space of these sequences.
\end{proof}

We conclude this section by an important result due to Globevnik (see~\cite[p.1]{Gl})

\begin{theorem} \label{t10} Let $F \subset \mathds{T}$ be a closed set of Lebesgue measure zero and let $p$ be a positive continuous function on $\mathds{T}$ such that the conjugate of $\log{p}$ is continuous on $\mathds{T} \setminus F$. Let $f \in C(F)$ satisfy $|f(s)| \leq p(s), s \in F$.

Given a neighbourhood $U \subset \mathds{T}$ of $F$ there is an extension $\tilde{f} \in \mathds{A}$ of $f$
such that
\begin{enumerate}[(i)]
  \item $|\tilde{f}(z)| \leq p(z), z \in U$.
  \item $|\tilde{f}(z)| = p(z), z \in \mathds{T} \setminus U$.
\end{enumerate}
  \end{theorem}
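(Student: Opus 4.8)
The plan is to assemble $\tilde{f}$ from three classical tools --- the outer function attached to $p$, the Rudin--Carleson peak interpolation theorem for the null set $F$, and a ``modulus--adjusting'' factor that is inner on $\mathds{T}\setminus U$ and vanishes on $F$ --- and then to run an iteration that turns an approximate solution into an exact one. Since $U$ is an open neighbourhood of the compact set $F$, we may replace $U$ by the union of those (finitely many) components of $U$ that meet $F$: this only strengthens the conclusion, and afterwards $E:=\mathds{T}\setminus U$ is a finite union of closed arcs with $\mathrm{dist}(E,F)>0$. (If $E=\emptyset$ the statement reduces to Rudin--Carleson, so assume $E\neq\emptyset$.)

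Let $O$ be the outer function in $\mathds{U}$ with $|O|=p$ on $\mathds{T}$; it is zero--free, $O,1/O\in H^\infty$, its modulus extends continuously to $\overline{\mathds{U}}$, and --- this is exactly the role of the hypothesis --- the continuity of the conjugate of $\log p$ on $\mathds{T}\setminus F$ forces $O$ itself to extend continuously to $\overline{\mathds{U}}\setminus F$, so that the requirement ``$|\tilde{f}|=p$ on $E$'' is equivalent to ``$|\tilde{f}/O|=1$ on $E$''. Next, using $|F|=0$, pick a continuous $\phi\colon\mathds{T}\to[0,\infty]$ with $\phi=0$ exactly on $E$, $\phi=+\infty$ exactly on $F$, $\phi$ smooth on $\mathds{T}\setminus F$, and $\phi\in L^1(\mathds{T})$, and put $S=e^{-\Phi}$, where $\Phi$ is analytic in $\mathds{U}$ with real part the Poisson integral of $\phi$. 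Then $\|S\|_\infty\le1$, $|S|=e^{-\phi}$ on $\mathds{T}$ (hence $|S|=1$ on $E$ and $0\le|S|<1$ on $U$), and $S(z)\to0$ as $z\to F$ because $\mathrm{Re}\,\Phi\to+\infty$ there; since the conjugate of $\phi$ is continuous on $\mathds{T}\setminus F$ (Privalov's theorem, localised), $S\in\mathds{A}$ with $S|_F=0$. Consequently $OS\in\mathds{A}$, $|OS|=p$ on $E$, $|OS|<p$ on $U\setminus F$ and $(OS)|_F=0$; in particular the case $f\equiv0$ of the theorem is already settled by taking $\tilde{f}=OS$.

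For general $f$ one uses that, being a closed null set, $F$ is a peak interpolation set for $\mathds{A}$: thus $f$ extends to an $\mathds{A}$--function whose modulus lies below any prescribed positive majorant and strictly below it off $F$. One then produces $\tilde{f}$ as a series $\tilde{f}=\tilde{f}_1+\sum_{n\ge1}g_n$: take $\tilde{f}_1=h+OS$, where $h$ interpolates $f$ on $F$ and is so small off a small neighbourhood of $F$ that $|\tilde{f}_1|\le p+\delta_1$ on $\mathds{T}$ while $\big||\tilde{f}_1|-p\big|$ is small on $E$; then inductively choose $g_n\in\mathds{A}$ with $g_n|_F=0$ --- again built from an outer function together with a Rudin--Carleson interpolation adapted to the current phase of $\tilde{f}_n$ on the arcs $E$ --- so as to halve $\big||\tilde{f}_n|-p\big|$ on $E$ while keeping $|\tilde{f}_{n+1}|\le p+\delta_{n+1}$ on $U$. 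With $\|g_n\|$ and $\delta_n$ summable and tending to $0$, the series converges in $\mathds{A}$ to an extension with $\tilde{f}|_F=f$, $|\tilde{f}|=p$ on $E$ and $|\tilde{f}|\le p$ on $U$.

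The main obstacle is the construction and bookkeeping of the corrections $g_n$ in this last step, and it is there that the real content of the theorem (Globevnik, \cite{Gl}) sits: one must arrange simultaneously that (i) the interpolation $\tilde{f}|_F=f$ survives the limit --- handled by keeping each $g_n$ identically zero on $F$; (ii) the inequality $|\tilde{f}|\le p$ persists on all of $U$ in the limit and not merely at finite stages --- which forces one to keep a margin $p-|\tilde{f}_n|>0$ on $U\setminus F$ from collapsing; and (iii) the error on the positive--measure set $E$ genuinely contracts by a fixed factor at each step --- and this is where the separation $\mathrm{dist}(E,F)>0$ and the continuity of the conjugate of $\log p$ off $F$ are used, since they are what allow the stage--$n$ boundary problem on the arcs $E$ to be solved by the outer/conjugate--function method with a summably small correction. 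The full construction is carried out in~\cite{Gl}.
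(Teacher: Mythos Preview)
The paper does not prove Theorem~\ref{t10} at all: it is quoted verbatim from Globevnik's paper~\cite{Gl} and used as a black box in the proof of Theorem~\ref{t11}. There is therefore no ``paper's own proof'' to compare your sketch against, and your final sentence --- deferring the full construction to~\cite{Gl} --- already matches exactly what the paper does.

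That said, your outline is a fair description of the architecture of Globevnik's argument (outer function for the target modulus, a function of the form $e^{-\Phi}$ to kill the boundary values on $F$ while remaining unimodular on $E$, Rudin--Carleson peak interpolation to hit the prescribed values on $F$, and an iterative correction on the arcs $E$). You are also right to flag that the genuine work lies in the iteration step, and that your sketch does not carry it out; as it stands, your proposal is an informed summary plus a citation rather than an independent proof, which is precisely the status the result has in the paper.
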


  \begin{remark} \label{r1}
  In particular, the statement of Theorem~\ref{t10} holds if $p$ is continuously differentiable on $\mathds{T} \setminus F$.  
  \end{remark}

\section{The main results}

\begin{theorem} \label{t4}
 Let $(Tf)(z) = w(z)f(B(z), f \in \mathds{A}, z \in \mathds{D}$, where $B$ is a finite Blaschke product of degree $d \geq 2$ and $w \in \mathds{A}, w \neq0$.
  
  Assume that
  \begin{enumerate}
    \item The Blaschke product $B$ is either elliptic or double parabolic.
    \item The weight $w$ has no zeros on the unit circle $\mathds{T}$.
  \end{enumerate}
  Then the spectral radius $\rho(T) >0$, $\sigma(T) = \sigma_{lsf}(T) = \rho(T)\mathds{D}$, 
  and $\sigma_{usf}(T) = \rho(T)\mathds{T}$.
  \end{theorem}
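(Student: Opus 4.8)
The plan is to establish, in order: that $r:=\rho(T)>0$ together with a dynamical formula for it; that $T$ is bounded below; that $\sigma_{a.p.}(T)$ contains no $\lambda$ with $0<|\lambda|<r$; that for $0<|\lambda|<r$ the operator $T-\lambda$ is upper semi-Fredholm of index $-\infty$; and then to assemble the three equalities. The first two points are easy. Since $B(\mathds{T})=\mathds{T}$ and $\|g\|_{\mathds{A}}=\sup_{\mathds{T}}|g|$ for $g\in\mathds{A}$, one has $\|Tf\|_{\mathds{A}}\ge(\min_{\mathds{T}}|w|)\,\|f\|_{\mathds{A}}$, so $T$ is bounded below; hence $0\notin\sigma_{a.p.}(T)$ and $T$ is upper semi-Fredholm with closed range. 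Also $\|T^{n}\|=\|w_{n}\|_{C(\mathds{T})}$, so $r=\lim_{n}\|w_{n}\|_{C(\mathds{T})}^{1/n}\ge\min_{\mathds{T}}|w|>0$. With $\phi=\log|w|$ continuous on $\mathds{T}$ (here the hypothesis that $w$ has no zeros on $\mathds{T}$ is used), a standard subadditivity and ergodic-averaging argument gives $\log r=\max_{\mu}\int_{\mathds{T}}\phi\,d\mu$ over $B|_{\mathds{T}}$-invariant probability measures; since $\mu\mapsto\int\phi\,d\mu$ is weak$^{*}$ continuous and, by Theorems~\ref{t2} and~\ref{t3}, the periodic ergodic measures are weak$^{*}$ dense among the ergodic $B|_{\mathds{T}}$-invariant ones, we also get $\log r=\sup_{O}\frac{1}{|O|}\sum_{p\in O}\phi(p)$, the supremum over periodic orbits $O$ of $B|_{\mathds{T}}$.

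The heart of the proof is the third point. Suppose $\lambda\in\sigma_{a.p.}(T)$ with $0<|\lambda|<r$. Restricting approximate eigenvectors to $\mathds{T}$ gives $\lambda\in\sigma_{a.p.}(\widehat{T})$, where $\widehat{T}g=w\cdot(g\circ B|_{\mathds{T}})$ on $C(\mathds{T})$ and $B|_{\mathds{T}}$ is continuous, open and onto. The argument proving $(1)\Rightarrow(2)$ in Lemma~\ref{l1} uses only approximate (not necessarily singular) eigenvectors, so it yields $k\in\mathds{T}$ with $|w_{m}(e)|\,|\lambda|^{-m}\le|w_{n}(k)|\,|\lambda|^{-n}$ whenever $(B|_{\mathds{T}})^{m}(e)=(B|_{\mathds{T}})^{n}(k)$. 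Since $B$ is elliptic or doubly parabolic, Theorem~\ref{t2} gives a conjugacy of $B|_{\mathds{T}}$ with $\varphi(z)=z^{d}$; transporting everything through it, the weight becomes a continuous nonvanishing $\widetilde{w}$ with the same $r$ and $\log r=\sup_{O}\frac{1}{|O|}\sum_{p\in O}\log|\widetilde{w}|(p)$ over periodic orbits of $z^{d}$, and taking $n=0$ above we get $\widetilde{k}\in\mathds{T}$ with $|\widetilde{w}_{m}(e)|\le|\lambda|^{m}$ for every $m$ and every $e$ with $e^{d^{m}}=\widetilde{k}$. Fix a periodic orbit $O$ of $z^{d}$, of period $\ell$, with $\log r':=\frac{1}{\ell}\sum_{p\in O}\log|\widetilde{w}|(p)>\log|\lambda|$ (possible, since that supremum equals $\log r$). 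In the base-$d$ digit model of $z^{d}$ (the substance of Lemma~\ref{l2}), a preimage $e$ with $e^{d^{m}}=\widetilde{k}$ is obtained by prepending $m$ arbitrary digits to the expansion of $\widetilde{k}$; choosing those digits to repeat the periodic block of $O$ forces $e,\varphi(e),\dots,\varphi^{m-1}(e)$ to shadow $O$ for all but a bounded number of steps (the bound depending only on the modulus of continuity of $\log|\widetilde{w}|$), so $\sum_{j=0}^{m-1}\log|\widetilde{w}|(\varphi^{j}(e))\ge m\log r'-o(m)$. For $m$ large this contradicts $\sum_{j=0}^{m-1}\log|\widetilde{w}|(\varphi^{j}(e))\le m\log|\lambda|$, because $\log r'>\log|\lambda|$. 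Hence $\sigma_{a.p.}(T)\cap\{\lambda\in\mathds{C}:0<|\lambda|<r\}=\emptyset$.

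To finish: by the previous points $T-\lambda$ is bounded below, hence upper semi-Fredholm, for every $\lambda$ with $|\lambda|<r$; since $\{\lambda:|\lambda|<r\}$ is connected, the semi-Fredholm index of $T-\lambda$ is constant there, equal to $\mathrm{ind}(T)$. But $\ker T=0$, while $T\mathds{A}=\{\,w\cdot(g\circ B):g\in\mathds{A}\,\}$ has infinite codimension in $\mathds{A}$: for each of the infinitely many fibres $\{\zeta_{1},\zeta_{2}\}\subset\mathds{U}$ with $B(\zeta_{1})=B(\zeta_{2})$, $\zeta_{1}\ne\zeta_{2}$, avoiding the finitely many zeros of $w$ in $\mathds{U}$, the functional $f\mapsto f(\zeta_{1})/w(\zeta_{1})-f(\zeta_{2})/w(\zeta_{2})$ annihilates $T\mathds{A}$, and these functionals are linearly independent. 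Hence $\mathrm{ind}(T)=-\infty$, so for $0<|\lambda|<r$ the operator $T-\lambda$ has closed range of infinite codimension and $\lambda\in\sigma_{lsf}(T)$. Therefore $\sigma(T)\supseteq\sigma_{lsf}(T)\supseteq\{\lambda:0<|\lambda|<r\}$, and since $\sigma(T)$ is closed and contained in $r\mathds{D}$, both $\sigma(T)$ and $\sigma_{lsf}(T)$ equal $r\mathds{D}$. Finally $\sigma_{usf}(T)=\sigma_{a.p.}(T)$ by Theorem~\ref{t1}; by the first and third points it lies in $r\mathds{T}$, and it contains $\partial\sigma(T)=r\mathds{T}$, so $\sigma_{usf}(T)=r\mathds{T}$. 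The main obstacle is the middle paragraph — converting the domination condition coming from Lemma~\ref{l1} into a contradiction via the dynamics — which is exactly where the hypothesis that $B$ is elliptic or doubly parabolic enters, through Shub's theorem. As an alternative to deducing $r\mathds{T}\subseteq\sigma_{a.p.}(T)$ from $\sigma(T)=r\mathds{D}$, one may instead construct approximate eigenfunctions of $T$ in $\mathds{A}$ concentrated near a maximizing periodic orbit directly, using Globevnik's theorem (Theorem~\ref{t10}).
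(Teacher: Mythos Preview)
Your proof is correct and follows essentially the same route as the paper: conjugate $B|_{\mathds{T}}$ to $z^d$ via Shub's theorem, use the variational formula for $\rho(T)$ together with density of periodic measures, invoke the $(1)\Rightarrow(2)$ direction of Lemma~\ref{l1} to obtain the domination inequality at some $k$, and then contradict it by pulling $k$ back along a branch that shadows a periodic orbit with multiplier exceeding $|\lambda|$. Your shadowing paragraph carries out explicitly (via base-$d$ digits) what the paper compresses into the appeal to Lemma~\ref{l2}, and your final step makes explicit, through constancy of the semi-Fredholm index on $\{|\lambda|<r\}$, the passage from $\mathrm{codim}\,T\mathds{A}=\infty$ to $\sigma_{lsf}(T)=\rho(T)\mathds{D}$ that the paper states without elaboration.
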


  \begin{proof} In virtue of Theorem~\ref{t2} we can assume without loss of generality that $B(z) = z^d, d \in \mathds{N}, d \geq 2$.
  The inequality $\rho(T) > 0$ follows from the well known formula (see e.g.~\cite{Kit})
    
    \begin{equation}\label{eq22}
      \rho(T) = \max \limits_{\mu \in M_{z^p}} \exp \int \ln{|w|} d \mu, 
    \end{equation}
    where $M_{z^d}$ is the set of all $z^d$-invariant probability measures in $C(\mathds{T})^\prime$, and from the standard fact that for $w \in \mathds{A}, w \neq 0$ we have 
  
  \begin{equation*}
    \int \limits_0^{2\pi} {\ln{|w(e^{i\theta})d\theta}} > -\infty.
  \end{equation*}
    To prove that $\sigma_{usf}(T) = \sigma_{a.p.}(T) = \rho(T)\mathds{T}$ we assume to the contrary that there is a $\lambda \in \sigma_{a.p.}(T)$ such that $0 < |\lambda| < \rho(T)$. It follows from~(\ref{eq22}) and 
    Theorem~\ref{t3} that there is a $z^d$-periodic point $t \in \mathds{T}$ of the smallest period $m$ such that
    
    \begin{equation}\label{eq24}
       |w_m(t)| > |\lambda|^m. 
    \end{equation}
     By Lemma~\ref{l1} and Theorem~\ref{t1} there is a point $k \in \mathds{T}$ such that 
        \begin{equation}\label{eq23}
      u \in \varphi^{-n}(\{k\}) \Rightarrow |w_n(u)| \leq |\lambda|^n
      , n \in \mathds{N}
    \end{equation}
    But, by Lemma~\ref{l2} there is a sequence $k_n \in \mathds{T}$ such that $k_1 = k, \varphi(k_n) = k_{n-1}, n \geq 2$ and $k_n \rightarrow t$. Clearly, in virtue of the inequality~(\ref{eq24}), the inequality~(\ref{eq23}) cannot be satisfied. Thus, $\sigma_{usf}(T) = \rho(T)\mathds{T}$.
    
    It remains to notice that clearly $codim(T\mathds{A}) = \infty$ and therefore 
    $\sigma_{lsf}(T) = \sigma(T) = \rho(T)\mathds{D}$.
  \end{proof}
  
  In the case when the weight $w$ has zeros on the unit circle the situation becomes considerably more complicated. Below we present some results concerning this case. The first of this results represents a slight improvement of Theorem~\ref{t4}.

  \begin{theorem} \label{t5}
Let $(Tf)(z) = w(z)f(B(z), f \in \mathds{A}, z \in \mathds{D}$, where $B$ is a finite Blaschke product of degree $d \geq 2$ and $w \in \mathds{A}$.
  
  Assume that
  \begin{enumerate}
    \item The Blaschke product $B$ is either elliptic or double parabolic.
    \item For any $t_0 \in \mathds{T}$ there are a $p \in \mathds{N}$ and points $t_1, \ldots, t_p \in \mathds{T}$ such that $B(t_j) = t_{j-1}, j = 1, \ldots, p$, and $w \neq 0$ on the set
        $\{t_1, \ldots, t_p, \bigcup \limits_{n \in \mathds{N}} \varphi^{-n}\{(t_p)\}\}$.
  \end{enumerate}  
  Then the spectral radius $\rho(T) >0$, $\sigma(T) = \sigma_{lsf}(T) = \rho(T)\mathds{D}$, 
  and $\sigma_{usf}(T) = \rho(T)\mathds{T}$.  
  \end{theorem}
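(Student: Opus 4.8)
The plan is to re-run the proof of Theorem~\ref{t4} almost verbatim, invoking hypothesis~(2) of the present theorem exactly at the places where ``$w$ has no zeros on $\mathds{T}$'' was used there. By Theorem~\ref{t2} we may again assume $B(z)=\varphi(z)=z^d$. Since hypothesis~(2) produces points $t_1,\dots,t_p\in\mathds{T}$ with $w(t_j)\neq 0$, the weight $w$ is not identically zero, so $\int_0^{2\pi}\ln|w(e^{i\theta})|\,d\theta>-\infty$ and $\rho(T)>0$ follows from~(\ref{eq22}) exactly as in Theorem~\ref{t4}. The inclusion $\rho(T)\mathds{T}\subseteq\sigma_{usf}(T)$ is immediate: by Theorem~\ref{t1}, $\sigma_{usf}(T)=\sigma_{a.p.}(T)$ is rotation invariant and contains the modulus-$\rho(T)$ boundary point of $\sigma(T)$, hence the whole circle.

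First I would dispose of $0$. Passing to the operator $T$ on $C(\mathds{T})$ (an approximate eigensequence in $\mathds{A}$, with $\|f\|_{\mathds{A}}=\|f|_{\mathds{T}}\|_{\infty}$, is one in $C(\mathds{T})$, and singularity is preserved because $\mathds{A}$ embeds as a closed subspace), Lemma~\ref{l3} shows that $0\in\sigma_{a.p.}(T)$ would force a point $k\in\mathds{T}$ with $w\equiv 0$ on $\varphi^{-1}(\{k\})$. But hypothesis~(2) applied to $t_0=k$ yields $t_1\in\varphi^{-1}(\{k\})$ with $w(t_1)\neq 0$, a contradiction. So $0\notin\sigma_{usf}(T)$.

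It remains to rule out $\lambda\in\sigma_{a.p.}(T)$ with $0<|\lambda|<\rho(T)$. As in Theorem~\ref{t4}, from~(\ref{eq22}) and Theorem~\ref{t3} one obtains a $\varphi$-periodic point $t$ of least period $m$ with $|w_m(t)|>|\lambda|^m$; then $w$ is automatically nonzero on the orbit of $t$, and $|w_m|\ge c>|\lambda|^m$ on a neighbourhood $V$ of $t$ by continuity. Since $\lambda\in\sigma_{usf}(T)\setminus\{0\}$, Lemma~\ref{l1} (on $C(\mathds{T})$) and Theorem~\ref{t1} give $k\in\mathds{T}$ with $|w_n(u)|\le|\lambda|^n$ whenever $u\in\varphi^{-n}(\{k\})$. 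This is the one step of Theorem~\ref{t4} that genuinely needs modification: there one simply picked, via Lemma~\ref{l2}, any backward orbit of $k$ converging to $t$ and telescoped the weight along it, the factors being harmless because $w$ never vanished on $\mathds{T}$; now we must produce such a backward orbit that also avoids $Z=\{z\in\mathds{T}:w(z)=0\}$. Hypothesis~(2) is what makes this possible: applied to $t_0=k$ it yields $t_1,\dots,t_p$ with $\varphi(t_j)=t_{j-1}$ and $w\neq 0$ on $\{t_1,\dots,t_p\}\cup\bigcup_n\varphi^{-n}(\{t_p\})$, and then Lemma~\ref{l2} (with $u=t_p$, $v=t$) supplies a backward orbit of $t_p$ converging to $t$ and lying entirely in that zero-free set. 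Splicing the finite chain $t_1,\dots,t_p$ onto this tail gives a backward orbit $k=k_1,k_2,\dots$ with $\varphi(k_{n+1})=k_n$, $k_n\to t$, and $w(k_n)\neq 0$ for all $n\ge 2$. Choosing $n_0\ge 2$ with $k_n\in V$ for $n\ge n_0$ and $\delta:=\prod_{j=2}^{n_0}|w(k_j)|>0$, the same telescoping of $w_{n_0+Mm-1}$ along this orbit as in Theorem~\ref{t4} (now legitimate, since none of the relevant factors vanish and each $w_m(k_{n_0+lm})$ is bounded below by $c$) gives $|w_{n_0+Mm-1}(k_{n_0+Mm})|\ge\delta c^M$; since $k_{n_0+Mm}\in\varphi^{-(n_0+Mm-1)}(\{k\})$, Lemma~\ref{l1} bounds the same quantity by $|\lambda|^{n_0+Mm-1}$, so $\delta(c/|\lambda|^m)^M\le|\lambda|^{n_0-1}$ for all $M$ — absurd, because $c>|\lambda|^m$. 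Hence $\sigma_{usf}(T)=\sigma_{a.p.}(T)=\rho(T)\mathds{T}$, and, exactly as in Theorem~\ref{t4}, $\mathrm{codim}(T\mathds{A})=\infty$ forces $\sigma_{lsf}(T)=\sigma(T)=\rho(T)\mathds{D}$.

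The main obstacle is precisely this backward-orbit construction: one must thread a backward orbit of the Lemma~\ref{l1} point $k$ into the ``zero-free subtree'' guaranteed by hypothesis~(2) while still making it accumulate at the periodic point $t$, and Lemma~\ref{l2} (density of backward orbits of $z^d$) is what reconciles the two requirements. A secondary technical point, absent from Theorem~\ref{t4}, is that $\ln|w|$ is now only upper semicontinuous, so the passage from~(\ref{eq22}) and Theorem~\ref{t3} to the periodic point $t$ must be arranged so that the approximating periodic ergodic measures stay off $Z$; since $\rho(T)>|\lambda|>0$, the maximizing invariant measure (or, after ergodic decomposition, a suitable ergodic component) integrates $\ln|w|$ to something $>\ln|\lambda|>-\infty$ and hence is carried by a region on which $\log|w|$ is continuous, and one selects the periodic approximants there.
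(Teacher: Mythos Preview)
Your proof is correct and is exactly the paper's approach: Lemma~\ref{l3} handles $\lambda=0$, and the argument of Theorem~\ref{t4} handles $0<|\lambda|<\rho(T)$, with hypothesis~(2) supplying the zero-free backward orbit that makes the telescoping legitimate. The paper's own proof is a two-line sketch (``use Lemma~\ref{l3}; the rest is exactly as in Theorem~\ref{t4}''), so your explicit splicing of the chain $t_1,\dots,t_p$ onto a Lemma~\ref{l2} tail, and your flagging of the upper-semicontinuity issue for $\ln|w|$ in the passage through Theorem~\ref{t3}, are elaborations the paper leaves to the reader.
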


  \begin{proof}
    It follows from Lemma~\ref{l3} that $0 \notin \sigma_{a.p.}(T)$. The case $0 < |\lambda| < \rho(T)$ can be considered exactly like in the proof of Theorem~\ref{t4}.
  \end{proof}
 
The following example illustrates the statement of Theorem~\ref{t5}.

\begin{example} \label{e6}
 Let $d \in \mathds{N}, d \geq 2$ and $(Tf)(z) = \frac{1-z}{2}f(z^d), f \in A, z \in \mathds{D}$. 

\noindent (1) If $d$ is odd, then $\sigma_{lsf}(T) = \sigma(T) = \mathds{D}$ and $\sigma_{usf}(T) = \mathds{T}$.

\noindent (2) If $d = 2k, k \in \mathds{N}$, then $\sigma_{lsf}(T) = \sigma(T) = \sin{\Big{(}\frac{\pi k}{2k+1}\Big{)}} \mathds{D}$ and $\sigma_{usf}(T) = \sin{\Big{(}\frac{k}{2k+1}\Big{)}} \mathds{T}$. 
\end{example}

\begin{proof}
  (1) This case is trivial, indeed by Theorem~\ref{t5} we only need to verify that $\rho(T) = 1$ and it is obvious because $|w_n(-1)| = 1, n \in \mathds{N}$.

(2)By Theorem~\ref{t5} it is sufficient to prove that $\rho(T) = \sin{\Big{(}\frac{\pi k}{2k+1}\Big{)}}$.

The inequality $\rho(T) \geq \sin{\Big{(}\frac{\pi k}{2k+1}\Big{)}}$ is trivial. Indeed, put
 $z =\exp{\Big{(}\frac{i\pi k}{2k+1}\Big{)}}$. Then, $|w_n(z)| = \sin^n{\Big{(}\frac{\pi k}{2k+1}\Big{)}}$.

To prove that $\rho(T) \leq \sin{\Big{(}\frac{\pi k}{2k+1}\Big{)}}$ it is sufficient to prove that for any $n \in \mathds{N}$ 
\begin{equation}\label{eq26}
  \max \limits_{\theta \in [0, 2\pi)} |sin(\theta)sin(2k\theta) \ldots sin((2k)^n \theta)|\leq \sin^n{\Big{(}\frac{\pi k}{2k+1}\Big{)}}.
\end{equation}
In the case when $k=1$ the inequality~(\ref{eq26}) was proved in~\cite[pp.21 and 86]{Pu}. A minor modification of the proof in~\cite{Pu} is sufficient to prove~(\ref{eq26}) in general. We provide the details for the reader's convenience. Following~\cite{Pu} we divide the proof of~(\ref{eq26}) into three steps.

(a) It is immediate to see that

\begin{equation*}
  \max \limits_{\theta \in [0,2\pi)} |sin^{2k}(\theta)sin(2k\theta)| = \sin^{2k+1}{\Big{(}\frac{\pi k}{2k+1}\Big{)}}.
\end{equation*}

(b) The expression
\begin{equation*}
   P_n = |sin^{2k}(\theta){sin^{2k+1}(2k\theta) \ldots sin^{2k+1}((2k)^{n-1}\theta)}sin((2k)^n \theta )| 
\end{equation*}
 takes its maximum value at the point $\theta = \frac{\pi k}{2k+1}$. We prove (b) by induction. For $n=1$ (b) follows from (a). Assume that (b) is true for an $n \in \mathds{N}$ and consider the ratio 
\begin{equation*}
   \frac{P_{n+1}}{P_n} = |sin^{2k}((2k)^n \theta)sin((2k)^{n+1}\theta)|. 
 \end{equation*}
 By (a) this ratio takes its greatest value at $\theta = \frac{\pi k}{2k+1}$, and (b) is proved.

(c) It follows from (b) that 
\begin{equation*}
  |sin(\theta)sin(2k\theta) \ldots sin((2k)^n \theta)|^{2k+1} \leq \sin^{n(2k+1)}{\Big{(}\frac{\pi k}{2k+1}\Big{)}}
\end{equation*}
 and~(\ref{eq26}) is proved.
\end{proof}

As the next example shows, we cannot omit condition (2) in the statement of Theorem~\ref{t5}.

\begin{example} \label{e7}
 Let $(Tf)(z) = w(z)f(z^2), f \in \mathds{A}, z \in \mathds{D}$ and let $w \in \mathds{A}$ be such that
$w(\exp{(i2\pi/3)}) = w(\exp{(i4\pi/3)}) = \|w\| = 1$, $w(1) = 1/2$, $w(-1) = 0$, and $-1$ is the only zero of $w$ on $\mathds{T}$.

It follows from Corollary~\ref{c1} and the reasoning in the proof of Theorem~\ref{t4} that
\begin{equation*}
  \sigma(T) = \sigma_{lsf}(T) = \mathds{D} \; \text{and} \; \sigma_{usf}(T) = (1/2)\mathds{T} \cup \mathds{T}.
\end{equation*}
 \end{example}
 
 Example~\ref{e7} is a special case of the following theorem.

 \begin{theorem} \label{t6}
   Let $B$ be either elliptic or double parabolic finite Blaschke product of degree $d \geq 2$.
   Let $\lambda_1, \ldots, \lambda_m \in \mathds{C}$, $0 \leq |\lambda_1| < |\lambda_2| <  \ldots < |\lambda_m|$.
   
   Then there is a $w \in \mathds{A}$ such that for the operator $T$,
   
   \begin{equation*}
     (Tf)(z) = w(z)f(B(z)), f \in \mathds{A}, z \in \mathds{D},
   \end{equation*}
   we have $\sigma_{usf}(T) = \bigcup \limits_{j=1}^m \lambda_j\mathds{T}$.
 \end{theorem}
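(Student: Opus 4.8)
The plan is to realize the prescribed union of circles by pinning $|w|$ to a fixed constant on each of $m$ disjoint periodic orbits of the doubling map, and by placing a finite set of zeros of $w$ that kills every non‑periodic preimage of those orbits. By Theorem~\ref{t2} we may assume $B(z)=z^{d}=:\varphi(z)$, and since $\sigma_{usf}$ is rotation invariant (Theorem~\ref{t1}) we may assume $0\le\lambda_{1}<\cdots<\lambda_{m}$ are nonnegative reals. First I would fix pairwise disjoint $\varphi$‑periodic orbits $O_{1},\dots,O_{m}\subset\mathbb T$ (these are abundant, e.g.\ of distinct small periods), put $m_{0}=|O_{m}|$, and let $F$ be the finite set of all non‑periodic $s\in\mathbb T$ with $\varphi(s)\in\bigcup_{j}O_{j}$; if $\lambda_{1}=0$ I also adjoin to $F$ the set $\varphi^{-1}(k_{0})$ for one chosen $k_{0}\in F$ with $\varphi(k_{0})\in O_{m}$. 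Set $\mathcal O=F\cup O_{1}\cup\cdots\cup O_{m}$; this set is finite, forward invariant, the forward orbit of every point of $F$ lies in $\mathcal O$, and $\varphi^{-1}(O_{j})\subseteq O_{j}\cup F$ for each $j$. Using Theorem~\ref{t10}/Remark~\ref{r1} (applied with exceptional set $F$ to a smooth positive function $p$ with $p\equiv\lambda_{j}$ on $O_{j}$ and $\max p=\lambda_{m}$), or equivalently by multiplying an outer function of smooth positive modulus by elementary functions in $\mathbb A$ vanishing exactly at the points of $F$, I obtain $w\in\mathbb A$ with $|w|\equiv\lambda_{j}$ on $O_{j}$, $\|w\|_{\infty}=\lambda_{m}$, and $Z(w)\cap\mathbb T=F$ exactly.

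Next I would record the easy facts. Since $\|w_{n}\|\le\|w\|^{n}$ we get $\rho(T)\le\lambda_{m}$, while $\|T^{n}\|\ge\|w_{n}\|\ge|w_{n}(z_{m})|=\lambda_{m}^{n}$ gives $\rho(T)\ge\lambda_{m}$ (this also follows from~\eqref{eq22} with the periodic measure on $O_m$); hence $\rho(T)=\lambda_{m}$, so $\sigma_{usf}(T)=\sigma_{a.p.}(T)\subseteq\lambda_{m}\mathbb D$. For the lower bound it suffices, by rotation invariance, to put one point of each circle into $\sigma_{usf}(T)$. For $\lambda_{j}>0$, Corollary~\ref{c1} applies at $k=z_{j}$: $|w_{n}(z_{j})|=\lambda_{j}^{n}$ and every non‑periodic preimage of $z_{j}$ lies in $F=Z(w)$, so $\lambda_{j}\in\sigma_{a.p.}(T)$ (here and below I use the identification of the semi‑Fredholm spectra of $T$ with those of its restriction to the Shilov boundary $\mathbb T$, cf.~\cite{KO} and Theorem~\ref{t1}). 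If $\lambda_{1}=0$, then $w\equiv0$ on $\varphi^{-1}(k_{0})$, so $0\in\sigma_{a.p.}(T)$ by Lemma~\ref{l3} (and by an explicit peak‑function argument directly on $\mathbb A$). Thus $\bigcup_{j=1}^{m}\lambda_{j}\mathbb T\subseteq\sigma_{usf}(T)$.

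For the reverse inclusion take $\lambda$ with $0<|\lambda|<\lambda_{m}$ and $|\lambda|\notin\{\lambda_{1},\dots,\lambda_{m}\}$, suppose $\lambda\in\sigma_{usf}(T)$, and apply Lemma~\ref{l1} to obtain $k\in\mathbb T$ such that, in particular, $|w_{n}(k)|\ge|\lambda|^{n}$ and $|w_{n}(u)|\le|\lambda|^{n}$ for all $u\in\varphi^{-n}(k)$ and all $n\ge0$. If $k\in O_{j}$ for some $j$, then taking $u$ in $O_{j}$ these two inequalities force $\lambda_{j}=|\lambda|$, impossible; if $k\in F$ then $|w_{1}(k)|=0<|\lambda|$, impossible. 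Hence $k\notin\mathcal O$, so $k$ is not in the forward orbit of any point of $F$, and therefore the entire backward‑orbit tree of $k$ avoids $Z(w)=F$. Since $z_{m}$ is a repelling fixed point of $\psi:=\varphi^{m_{0}}$ (its multiplier has modulus $d^{m_{0}}>1$), I would then pick a $\psi$‑backward orbit $k=l_{0},l_{1},l_{2},\dots$ of $\varphi$ converging to $z_{m}$: route a preimage of $k$ into a small repelling neighbourhood of $z_{m}$ by density of the sets $\varphi^{-l}(k)$, then iterate the attracting local branch of $\psi^{-1}$. This backward orbit avoids $F$, so $|w(l_{s})|>0$ and $|w(l_{s})|\to\lambda_{m}$, whence $\frac1N\sum_{s\le N}\log|w(l_{s})|\to\log\lambda_{m}>\log|\lambda|$; but Lemma~\ref{l1} forces $\prod_{s=1}^{m_{0}n}|w(l_{s})|\le|\lambda|^{m_{0}n}$, i.e.\ $\frac1{m_0 n}\sum_{s\le m_0 n}\log|w(l_{s})|\le\log|\lambda|$ for all $n$ — a contradiction. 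So $\lambda\notin\sigma_{usf}(T)$. Finally, when $\lambda_{1}>0$, no point has all of its $\varphi$‑preimages in $F$ (one preimage of a point of $O_{j}$ lies in $O_{j}$, not in $Z(w)$), so $0\notin\sigma_{a.p.}(T)$ by Lemma~\ref{l3}; together with $\rho(T)=\lambda_{m}$ this gives $\sigma_{usf}(T)=\bigcup_{j=1}^{m}\lambda_{j}\mathbb T$.

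I expect the genuine difficulty to be twofold. The first, and main, obstacle is the construction of $w\in\mathbb A$ with an \emph{exactly} prescribed finite zero set on $\mathbb T$ together with prescribed constant moduli on the orbits $O_{j}$: this is where Globevnik's theorem (Theorem~\ref{t10}, Remark~\ref{r1}) is essential, and one must ensure that no auxiliary zeros are introduced, since the whole exclusion argument relies on being able to route backward orbits freely around $Z(w)$. The second point, more routine but needing care, is making rigorous the step "choose a backward orbit from $k$ converging to $z_{m}$ and avoiding $Z(w)$"; the repelling nature of $z_{m}$ under $\psi$ plus density of preimages handle it once the zero set is known to be the finite set $F$. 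It is worth noting that the exclusion argument correctly breaks down at $\lambda=\lambda_{j}$ for $j<m$: there the only admissible witness in Lemma~\ref{l1} is $k\in O_{j}$, and by the inclusion $\varphi^{-1}(O_{j})\subseteq O_{j}\cup F$ every backward orbit leaving $O_{j}$ immediately enters $F=Z(w)$, so the product along it vanishes and no contradiction arises — which is exactly why $\lambda_{j}\mathbb T$ survives in $\sigma_{usf}(T)$.
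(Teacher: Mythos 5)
Your construction and verification coincide with the paper's: the paper takes $w$ with $w\equiv\lambda_j$ on pairwise disjoint periodic orbits $P_j$, $w\equiv 0$ on $B^{-1}(P_j)\setminus P_j$, and $|w|\le|\lambda_m|$ on $\mathds{T}$, and then invokes exactly the ingredients you use (Theorem~\ref{t1}, Corollary~\ref{c1}, Lemmas~\ref{l3} and~\ref{l2}); you merely spell out the inclusion/exclusion details the paper leaves implicit, including the genuinely needed extra stipulation (omitted in the paper's conditions) that $w$ have no zeros on $\mathds{T}$ other than the non-periodic preimages of the orbits. The only blemish is the $\lambda_1=0$ case, where you cannot ask a \emph{positive} Globevnik datum $p$ to equal $\lambda_1$ on $O_1$ — but this is cosmetic: simply drop $O_1$ there and get $0\in\sigma_{a.p.}(T)$ from Lemma~\ref{l3} exactly as you already do.
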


 \begin{proof}
   We can assume without loss of generality that $B(z) = z^d$. Let $t_j \in \mathds{T}, j = 1, \ldots , m$ be 
 $z^d$-periodic points. Let $p_j$ be the smallest period of the point $z_j$. We can assume that the sets
 $P_j = \{z_j^{d^k}, k = 0, \ldots , p_j -1\}$ are pairwise disjoint. Let $w \in \mathds{A}$ be such that 
   \begin{enumerate}
     \item $w \equiv \lambda_j$ on $P_j$,
     \item $w \equiv 0$ on $B^{-1}(P_j) \setminus P_j$,
     \item $|w| \leq |\lambda_m|$ on $\mathds{T}$.
   \end{enumerate}
   Let $Tf = w(f \circ B)$. The equality $\sigma_{usf}(T) = \bigcup \limits_{j=1}^m \lambda_j\mathds{T}$  follows from Theorem~\ref{t1}, Corollary~\ref{c1}, and Lemmas~\ref{l3} and~\ref{l2}. 
   \end{proof}
   
   In the case of elliptic or double parabolic finite Blaschke products we can considerably strengthen the statement of Theorem~\ref{t6}.

   \begin{theorem} \label{t11}
     Let $B$ be an elliptic or double parabolic finite Blaschke product. Let $\lambda_i, i \geq 0$, be positive real numbers such that $\lambda_0 > \lambda_1 > \ldots$ and $\lim \limits_{n \to \infty} \lambda_n =0$.
     
     Then, there is a $w \in \mathds{A}$ such that 
     
     \begin{equation}\label{eq27}
       \sigma_{usf}(T) = \{0\} \cup \bigcup \limits_{i \geq 0} \lambda_i \mathds{T},
     \end{equation}
     where $(Tf)(z) = w(z)f(B(z)), f \in \mathds{A}, z \in \mathds{D}$.
   \end{theorem}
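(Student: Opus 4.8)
The plan is to reduce the problem to $B(z)=z^{d}$ via Theorem~\ref{t2}, to build $w$ by prescribing $|w|$ on a carefully chosen family of periodic orbits and applying Globevnik's extension theorem, and then to compute $\sigma_{usf}(T)=\sigma_{a.p.}(T)$ from Lemma~\ref{l1}, Corollary~\ref{c1} and Lemma~\ref{l2}, in the spirit of the proofs of Theorems~\ref{t4}--\ref{t6}. Concretely: write $\varphi(z)=z^{d}$, and use Theorem~\ref{t9} to pick, after passing to a subsequence, $z^{d}$-periodic points $\zeta_{i}\in\mathds{T}$ ($i\geq 0$) with pairwise disjoint trajectories $P_{i}=Tr(\zeta_{i})$, on which $z^{d}$ is order preserving, placed so sparsely that the countable set $E=\bigcup_{i\geq0}(\varphi^{-1}(P_{i})\setminus P_{i})$ together with all of its accumulation points forms a closed Lebesgue-null set $\overline{E}$ that is disjoint from $\bigcup_{j}P_{j}$ and stays away from a fixed open neighbourhood $V$ of $P_{0}$, and so that moreover $F=\overline{E}\cup\overline{\bigcup_{i}P_{i}}$ is closed, null, with each $P_{i}$ isolated in $F$. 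Let $f\in C(F)$ be $\lambda_{i}$ on $P_{i}$ and $0$ on $F\setminus\bigcup_{i}P_{i}$ (continuous because $\lambda_{i}\to0$), choose a positive $p\in C(\mathds{T})$, continuously differentiable off $F$, with $p=\lambda_{i}$ on $P_{i}$, $p\geq|f|$ on $F$ and $p\leq\lambda_{0}$ on $\mathds{T}$ (possible since $\lambda_{i}\to0$), and let $w\in\mathds{A}$ be the extension furnished by Theorem~\ref{t10} and Remark~\ref{r1}. Then $|w|\leq\lambda_{0}$, $|w|=\lambda_{i}$ on $P_{i}$ for every $i$, and $w\equiv0$ on $\overline{E}\supseteq\varphi^{-1}(P_{i})\setminus P_{i}$; the zero set $Z=\{w=0\}\cap\mathds{T}$ is closed, null (as $w\neq0$ in $\mathds{A}$), contains $\overline{E}$, misses $\bigcup_{i}P_{i}$, and, after shrinking the neighbourhood of $F$ in Theorem~\ref{t10} if needed, misses $V$. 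By~(\ref{eq22}), applied to the invariant probability measure supported on $P_{0}$, $\rho(T)=\lambda_{0}>0$.

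For the inclusion $\supseteq$: for each $i$ one has $|w_{n}(\zeta_{i})|=\lambda_{i}^{n}$ and every preimage of $\zeta_{i}$ off $P_{i}$ lies in $\varphi^{-1}(P_{i})\setminus P_{i}\subseteq Z$, so Corollary~\ref{c1} gives $\lambda_{i}\in\sigma_{a.p.}(T)$. By Theorem~\ref{t1}, $\sigma_{usf}(T)=\sigma_{a.p.}(T)$ is closed and rotation invariant, whence $\lambda_{i}\mathds{T}\subseteq\sigma_{usf}(T)$ for all $i$ and, since $\lambda_{i}\to0$, also $0\in\sigma_{usf}(T)$.

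For the inclusion $\subseteq$: since $\sigma_{usf}(T)\subseteq\sigma(T)\subseteq\lambda_{0}\mathds{D}$ and $\lambda_{0}\mathds{T}$ is already listed, it suffices to rule out $\mu$ with $0<|\mu|<\lambda_{0}$ and $|\mu|\notin\{\lambda_{i}\}$. If such $\mu\in\sigma_{usf}(T)$, Lemma~\ref{l1} provides $k\in\mathds{T}$ with $\varphi^{m}(e)=\varphi^{n}(k)\Rightarrow|w_{m}(e)||\mu|^{-m}\leq|w_{n}(k)||\mu|^{-n}$ for all $m,n\geq0$ and $e\in\mathds{T}$; with $m=0$ this gives $|w_{n}(k)|\geq|\mu|^{n}$, so the forward orbit of $k$ avoids $Z$, and with $n=0$ it gives $|w_{m}(e)|\leq|\mu|^{m}$ along every backward orbit of $k$. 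If $k\in P_{i}$, the backward orbit of $k$ inside $P_{i}$ forces $\lambda_{i}\leq|\mu|$ while the forward estimate forces $\lambda_{i}\geq|\mu|$, which is impossible. If $k\notin\bigcup_{i}P_{i}$, then, since preimages of points off $\bigcup_{i}P_{i}$ stay off $\bigcup_{i}P_{i}$, no backward orbit of $k$ meets $\bigcup_{i}P_{i}$, and moreover the forward orbit of $k$ never enters any $P_{j}$ (entering would require a step through $\varphi^{-1}(P_{j})\setminus P_{j}\subseteq Z$); arguing as in Lemma~\ref{l2}, but inside the full-measure forward-invariant set $\mathds{T}\setminus\bigcup_{n\geq0}\varphi^{-n}(Z)$, which contains $P_{0}$ and, because $Z\cap V=\varnothing$, admits zero-avoiding backward branches into $P_{0}$, one produces a backward orbit $k=e_{0},e_{1},\dots$ of $k$ avoiding $Z$ whose points accumulate on $P_{0}$. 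Since $|w|\equiv\lambda_{0}$ on $P_{0}$, this yields $|w_{m}(e_{m})|=\prod_{j=1}^{m}|w(e_{j})|\geq(\lambda_{0}-\varepsilon)^{m}$ for large $m$, so $|w_{m}(e_{m})||\mu|^{-m}\to\infty$, whereas $|w_{0}(k)|=1$; this contradicts the displayed implication with $n=0$. Hence $\sigma_{usf}(T)=\{0\}\cup\bigcup_{i\geq0}\lambda_{i}\mathds{T}$, which is~(\ref{eq27}).

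I expect the main obstacle to lie precisely in the two coupled points above: choosing the orbits $P_{i}$ from the supply of Theorem~\ref{t9} so that $\overline{E}$, the closure of the set of inadmissible preimages, is a closed null set separated from every $P_{j}$ and from a neighbourhood of $P_{0}$, with no periodic orbit other than the $P_{j}$'s getting trapped by $Z$; and then, for every $k\notin\bigcup_{i}P_{i}$, steering a $Z$-avoiding backward orbit of $k$ so that it accumulates on $P_{0}$. The first uses the laminated structure of the order-preserving periodic orbits of Theorem~\ref{t9}, the second the backward transitivity of $z^{d}$ underlying Lemma~\ref{l2}; everything else is a routine elaboration of the arguments already used for Theorems~\ref{t4}--\ref{t6}.
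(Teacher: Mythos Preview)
Your plan is essentially the paper's: reduce to $B(z)=z^{d}$, use Theorem~\ref{t9} to select periodic orbits, manufacture $w$ via Globevnik's theorem, and then run Corollary~\ref{c1} for the inclusion and Lemma~\ref{l1}$+$Lemma~\ref{l2} for the exclusion. The one real difference is in how $w$ is built, and this is exactly where your acknowledged ``main obstacle'' sits.

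You try a single application of Theorem~\ref{t10} and then must control where the incidental zeros of the extension fall (your requirement that $Z$ miss a neighbourhood $V$ of $P_{0}$ is not delivered by Globevnik, which only guarantees $|w|=p$ on $\mathds{T}\setminus U$). The paper avoids this entirely by writing $w=\sum_{n\ge N} f_{n}$ as a geometrically convergent series of Globevnik extensions: each $f_{n}$ is prescribed to equal $\tfrac{3}{4^{\,n-N+1}}f$ on $F$ and $\tfrac{3}{4^{\,n-N+1}}\lambda_{0}$ on $Tr(z_{0})$, has norm $\tfrac{3}{4^{\,n-N+1}}\lambda_{0}$, and satisfies $|f_{n+1}|\le\tfrac14|f_{n}|$ on $\mathds{T}\setminus V_{n+1}$, with the neighbourhoods $V_{n}\supset F$ shrinking to $F$. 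Summing gives $w\equiv\lambda_{i}$ on $Tr(z_{i})$ for every $i\ge0$ and, by the domination, $|w|>0$ on all of $\mathds{T}\setminus F$. Thus the zero set of $w$ on $\mathds{T}$ is \emph{exactly} $F\setminus\bigcup_{i}Tr(z_{i})$, no ``sparse placement'' of orbits is needed, and the separation of $Tr(z_{0})$ from $F$ comes for free from the order-preserving/non-order-preserving dichotomy in Theorem~\ref{t9} (this is why $z_{0}$ is singled out there). That dissolves your first obstacle.

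For the exclusion step the paper simply says ``by the reasoning in the proof of Theorem~\ref{t4}''. Your more explicit version---steer a backward orbit of $k$ toward $Tr(z_{0})$ while avoiding the zero set---is the honest way to make that appeal rigorous when $w$ has zeros, and the paper's construction (zeros confined to $F$, hence bounded away from $Tr(z_{0})$) is precisely what makes Lemma~\ref{l2} applicable: once the backward orbit enters a neighbourhood of $Tr(z_{0})$ it stays in the zero-free region, so only finitely many initial factors of $w_{m}$ can be small, and the contradiction with~(\ref{eq5}) follows as in Theorem~\ref{t4}.
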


   \begin{proof}
     We can assume without loss of generality that $B(z) = z^d, d \geq 2$. Let $z_i, i \geq 0$, be $z^d$-periodic points in $\mathds{T}$ with the properties described in the statement of Theorem~\ref{t9}.
     It follows from Definition~\ref{d1} and properties (2),(3) in the statement of Theorem~\ref{t9} that
     $Tr(z_0) \cap cl \bigcup \limits_{i \in \mathds{N}} Tr(z_i) = \emptyset$. Therefore,
     $mes(cl \bigcup \limits_{i \in \mathds{N}} Tr(z_i)) = 0$ where $mes$ is the normalized Lebesgue measure on $\mathds{T}$.
     
     Let $E = cl \bigcup \limits_{i \in \mathds{N}} Tr(z_i)$ and $F = E \cup B^{-1}(E)$. We define a function $f$ on $F$ as follows
     
     \begin{equation}\label{eq28}
      f(z)= \left\{
         \begin{array}{ll}
           \lambda_n, & \hbox{if $z \in Tr(z_n)$;} \\
           0, & \hbox{if $z \in F \setminus \bigcup \limits_{i \in \mathds{N}} Tr(z_i)$. }
         \end{array}
       \right.
     \end{equation}
     It is immediate to see that $f \in C(F)$.
     
     Let $N \in \mathds{N}$ be such that $1/N < dist(Tr(z_0), F)$. For any $n \geq N$ let 
     $V_n = \{z \in \mathds{T} : dist(z,F) < 1/n\}$. By Theorem~\ref{t10} and Remark~\ref{r1} for any $n \geq N$ there is $f_n \in \mathds{A}$ such that
     \begin{enumerate}[(a)]
       \item $f_n \equiv \frac{3}{4^{n-N+1}}f$ on $F$,
       \item $f_n \equiv \frac{3}{4^{n-N+1}}\lambda_0$ on $Tr(z_0)$,
       \item $\|f_n\| = \frac{3}{4^{n-N+1}}\lambda_0$,
       \item $|f_n| >0$ on $\mathds{T} \setminus V_n$,
       \item $|f_{n+1}| \leq \frac{1}{4}|f_n|$ on $\mathds{T} \setminus V_{n+1}$.
     \end{enumerate}
     Let $w = \sum \limits_{n=N}^\infty f_n$ and $(Tf)(z) = w(z)f(B(z)), f \in \mathds{A}, z \in \mathds{D}$.
     It is immediate to see that $w \equiv \lambda_n$ on $Tr(z_n)$, where $n \geq 0$ and that $|w| >0$ on $\mathds{T} \setminus F$. Moreover, $\rho(T) = \lambda_0$.
     
     In virtue of Corollary~\ref{c1} we have $\lambda_i \mathds{T} \subset \sigma_{usf}(T), i \in \mathds{N}$.
     
     It remains to notice that if $|\lambda| \notin \{0, |\lambda|_i, i \geq 0\}$ then in virtue of Lemma~\ref{l1} and the reasoning in the proof of Theorem~\ref{t4} we have $\lambda \notin \sigma_{a.p.}(T)$.      
     \end{proof}
   
   We believe that in the case when the finite Blaschke product is elliptic or double parabolic the converse to the statement of Theorem~\ref{t11} is true.
  
\begin{conjecture} \label{co1}
  Let $B$ be an elliptic or double parabolic Blaschke product of degree $d \geq 2$. Then
     the set $\{|\lambda| : \lambda \mathds{T} \subset \sigma_{usf}(T)\}$ is at most countable and its only accumulation point (if any) is 0.
 \end{conjecture}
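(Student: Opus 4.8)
The plan is as follows. By Theorem~\ref{t2} we may assume $B(z)=z^{d}=:\varphi(z)$, and by Theorem~\ref{t1} the set $\sigma_{usf}(T)=\sigma_{a.p.}(T)$ is rotation invariant and compact, hence a union of circles possibly together with $\{0\}$; writing $R=\{r>0:\ r\in\sigma_{usf}(T)\}$, the set $R\cup\{0\}$ is compact and the assertion to be proved is exactly that its only accumulation point is $0$. The engine is the witness description of $\sigma_{usf}(T)$ underlying the results above (Lemma~\ref{l1}, Corollary~\ref{c1}, Theorem~\ref{t1}): for $r>0$, $r\in R$ if and only if there is a \emph{witness} $k=k_{r}\in\mathds{T}$ with $|w_{m}(e)|\,r^{n}\le|w_{n}(k)|\,r^{m}$ whenever $\varphi^{m}(e)=\varphi^{n}(k)$ (compare~(\ref{eq5})). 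Taking $m=0$ forces $|w_{n}(k)|\ge r^{n}$ for all $n$, so the forward orbit of $k$ sustains geometric rate $\ge r$ at every time and in particular avoids $Z:=\{w=0\}\cap\mathds{T}$; fixing $n$ and letting $m$ grow bounds $\limsup_{m}m^{-1}\ln|w_{m}(e)|\le\ln r$ along every backward orbit $e$ of every forward image $\varphi^{n}(k)$.

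From a witness $k$ for $r$ I would first extract an ergodic datum. Any weak-$*$ limit $\mu$ of the empirical measures $N^{-1}\sum_{j<N}\delta_{\varphi^{j}k}$ is $\varphi$-invariant, and since $\ln|w|$ is bounded above and upper semicontinuous, $\int\ln|w|\,d\mu\ge\limsup_{N}N^{-1}\ln|w_{N}(k)|\ge\ln r$; an ergodic component $\nu$, supported on the $\omega$-limit set of $k$, then has $\int\ln|w|\,d\nu\ge\ln r$, whence $r\le\rho(T)$ by~(\ref{eq22}) and $\nu(Z)=0$. Next I would run the contrapositive of the mechanism of Theorems~\ref{t4} and~\ref{t5}: if $O$ is a $\varphi$-periodic cycle of length $p$ with geometric mean $\beta_{O}:=|w_{p}(t)|^{1/p}>r$ ($t\in O$), and some forward image $\varphi^{n}(k)$ admits a zero-free backward orbit that shadows $O$ for a long stretch before reaching $\varphi^{n}(k)$ in boundedly many further steps, then along that backward orbit there are points $e$ with $|w_{m}(e)|$ of order $\beta_{O}^{m}$, eventually exceeding $|w_{n}(k)|\,r^{m-n}$ and violating the witness inequality. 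Building such backward orbits via Lemma~\ref{l2} and supplying enough periodic cycles via Theorem~\ref{t3} should pin $r=\rho(T|_{C(L_{k})})$ for a canonical closed $\varphi$-invariant set $L_{k}\subseteq\mathds{T}\setminus Z$ attached to $k$ --- morally, the closure of the $\omega$-limit set of $k$ together with all periodic cycles reachable from it by zero-free backward routes --- so that $R\subseteq\{\rho(T|_{C(L)}):L\text{ admissible}\}$.

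It would remain to show this last set of values has no accumulation point other than $0$. For compact $\varphi$-invariant $L$ one has $\rho(T|_{C(L)})=\max_{\mu}\exp\int_{L}\ln|w|\,d\mu$, a genuine maximum by weak-$*$ compactness and upper semicontinuity, attained at some $\mu^{*}$ with $\mu^{*}(Z)=0$ whenever the value is positive; approximating $\mu^{*}$ by periodic ergodic measures --- via Theorem~\ref{t3}, after checking the relevant periodic orbits lie in the admissible locus --- realizes each admissible value as a limit of geometric means along periodic cycles inside $\bigcup_{k}L_{k}$. The concluding step would encode $\mathds{T}\setminus\bigcup_{n\ge0}\varphi^{-n}(Z)$ by the $d$-adic symbolic dynamics of the proof of Lemma~\ref{l2}, observe that the admissible subsystems $L_{k}$ are cut out by finitely describable constraints coming from $Z$, and use the order-preserving combinatorics of $z\mapsto z^{d}$ (Definition~\ref{d1}, Theorem~\ref{t9}) to bound, for each $r_{0}>0$, the number of distinct extremal exponents lying in $[r_{0},\rho(T)]$.

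The main obstacle is precisely this concluding step, which is also the part that remains open. In the zero-free case it does not arise --- Theorem~\ref{t4} leaves only the single circle $\rho(T)\mathds{T}$ --- and the constructions of Theorems~\ref{t6} and~\ref{t11} produce only radii coming from finitely or countably many periodic cycles; turning this picture into a proof requires a rigidity statement excluding a non-periodic witness whose forward orbit sustains a rate strictly between two ``neighbouring'' admissible periodic values while every backward route to it is blocked by $Z$. Establishing that the zero set cannot conspire in this way --- to freeze in a continuum, or a set with a positive accumulation point, of admissible rates --- is the heart of Conjecture~\ref{co1}.
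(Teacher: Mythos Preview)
The statement you attempted is labelled \emph{Conjecture} in the paper, and the paper provides no proof of it; it is explicitly left open (the authors only establish the converse direction in Theorem~\ref{t11} and state that they ``believe'' the present direction holds). There is therefore no proof in the paper to compare your proposal against.

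Your write-up is not a proof either, and you say so yourself: the decisive step --- showing that the set of admissible radii $\{\rho(T|_{C(L)})\}$ has no positive accumulation point --- is identified as ``the part that remains open'' and ``the heart of Conjecture~\ref{co1}.'' Up to that point your reduction is sound and aligned with the machinery of the paper (Theorem~\ref{t2} to pass to $z^{d}$, Theorem~\ref{t1} and Lemma~\ref{l1} for the witness description, Theorem~\ref{t3} and Lemma~\ref{l2} for periodic approximation and backward shadowing). The heuristic that each $r\in R$ should equal $\rho(T|_{C(L_k)})$ for a canonical invariant set $L_k$ is plausible but not established: you have not shown that the witness inequalities force $r$ to be exactly such a spectral radius rather than merely bounded above by one, nor have you made precise which $L_k$ is ``canonical.'' And the final combinatorial bound via symbolic dynamics and Theorem~\ref{t9} is a hope, not an argument --- the order-preserving/non-order-preserving dichotomy of Theorem~\ref{t9} gives existence of special orbits, not a finiteness constraint on extremal exponents. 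In short, you have correctly located where the difficulty lies, but the conjecture remains open in your proposal just as it does in the paper.
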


In conclusion we would like to notice that the statement of Theorem~\ref{t4} becomes false in the case of hyperbolic or single parabolic Blaschke products. The next proposition follows from Lemma~\ref{l1}.

\begin{proposition} \label{p1}
  Let $B$ to be a hyperbolic or single parabolic finite Blaschke product, $z_0$ be the Wolff-Denjoy point of $B$, and $\mathcal{B}$ be the immediate basin of attraction for $z_0$. Let $w \in \mathds{A}$ be such that  $|w(z_0)|=1$, $w$ has no zeros on $\mathds{T}$, and $|w(z)| < 1$ for any $z \in \mathds{T}, z \neq z_0$. Let $(Tf)(z) = w(z)f(B(z), f \in \mathds{A}, z \in \mathds{D}$. Then, $\sigma_{usf}(T) \supseteq \{\lambda \in \mathds{C}: \max \limits_{\varsigma \in \partial \mathcal{B}} |w| \leq |\lambda| \leq 1\}$.
\end{proposition}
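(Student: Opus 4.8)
The plan is to reduce everything to the circle map $\varphi:=B|_{\mathds{T}}$, which is a smooth degree-$d$ covering of $\mathds{T}$ and in particular a continuous open surjection, and then to apply Lemma~\ref{l1} in the direction $(2)\Rightarrow(1)$ — exactly as the authors do for $T$ on $\mathds{A}$ in the proofs of Theorems~\ref{t4} and~\ref{t5} via Theorem~\ref{t1} (which gives $\sigma_{usf}(T)=\sigma_{a.p.}(T)$, a rotation invariant set); when the approximate eigenvectors produced at the $C(\mathds{T})$ level have to be promoted to $\mathds{A}$, Globevnik's Theorem~\ref{t10} and Remark~\ref{r1} furnish the analytic extensions, which is legitimate since $\partial\mathcal{B}\subset J(B)\subset\mathds{T}$ has Lebesgue measure zero in the hyperbolic case. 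Write $\mu_0:=\max_{\varsigma\in\partial\mathcal{B}}|w|$. By rotation invariance it suffices to put every real $\lambda$ with $\mu_0\le\lambda\le1$ into $\sigma_{usf}(T)$, and for each such $\lambda$ I would exhibit a point $k\in\mathds{T}$ satisfying condition (2) of Lemma~\ref{l1} (note that the proof of $(2)\Rightarrow(1)$ itself constructs singular approximate eigenvectors, so one need not know beforehand that $\lambda\in\sigma_{a.p.}(T)$).

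For the outer circle $\lambda=1$ take $k=z_0$. Since $B(z_0)=z_0$ and $|w(z_0)|=1$ we have $w_n(z_0)=w(z_0)^n$, so $|w_n(z_0)/\lambda^n|=1$ for every $n$, while any $e$ with $\varphi^m(e)=\varphi^n(z_0)=z_0$ satisfies $|w_m(e)/\lambda^m|=|w_m(e)|\le1$ because $|w|\le1$ on $\mathds{T}$; thus condition (2) holds, $1\in\sigma_{usf}(T)$, and hence $\lambda\mathds{T}\subset\sigma_{usf}(T)$ for all $|\lambda|=1$. This already settles the single parabolic case in full, since there $z_0\in\partial\mathcal{B}$ forces $\mu_0=1$ and the asserted set is just $\mathds{T}$.

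It remains to treat $\mu_0\le\lambda<1$, which occurs only when $B$ is hyperbolic. The two structural facts I would use are: (i) the set $E:=\mathds{T}\setminus(\mathcal{B}\cap\mathds{T})$ contains $\partial\mathcal{B}$, is completely $\varphi$-invariant (it is the Julia part of the circle, the complement of the forward- and backward-invariant Fatou set), and $|w|\le\mu_0\le\lambda$ on $E$; (ii) the action of $\varphi$ on $E$ is, up to topological conjugacy, a subshift of finite type on which periodic orbits are dense and the Birkhoff averages of $\log|w|$ along periodic orbits fill an interval, so that — combined with the backward-density statement of Lemma~\ref{l2} — one can choose $k\in E$ whose forward orbit makes $|w_n(k)|$ comparable to $\lambda^n$ and which dominates, in the sense of (2), every backward branch of each $\varphi^n(k)$. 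With such a $k$, any $e$ with $\varphi^m(e)=\varphi^n(k)$ has $\varphi^j(e)\in E$ for $0\le j\le m$ by complete invariance of $E$, hence $|w_m(e)|\le\mu_0^m\le\lambda^m$; comparing this with $|w_n(k)|\asymp\lambda^n$ gives $|w_m(e)/\lambda^m|\le|w_n(k)/\lambda^n|$, i.e. condition (2), and so $\lambda\in\sigma_{usf}(T)$. Together with the outer circle this yields $\{\lambda:\mu_0\le|\lambda|\le1\}\subset\sigma_{usf}(T)$.

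The hard part is exactly step (ii) and the verification of (2) for $\mu_0\le\lambda<1$: one must arrange the forward orbit of $k$ to accumulate weight at the exact geometric rate $\lambda$ (which forces $k$ to be chosen carefully, near the part of $\partial\mathcal{B}$ where $|w|$ is close to $\mu_0$, using density of periodic orbits there), and simultaneously one must prevent any competing backward branch from leaking out of $E$ toward $z_0$, where $|w|\to1$ and the inequality in (2) would collapse — it is precisely the complete invariance of $\partial\mathcal{B}$ together with the bound $|w|\le\mu_0\le\lambda$ on it that blocks this leakage. The remaining ingredients are routine: $\|w_n\|_{\mathds{A}}=\sup_{\mathds{T}}|w_n|=1$ for all $n$ (maximum modulus, with equality at $z_0$) gives $\rho(T)=1$ and $\sigma(T)\subset\overline{\mathds{D}}$, so the annulus does sit inside $\sigma(T)$, and $T\mathds{A}$ has infinite codimension.
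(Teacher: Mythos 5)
Your reduction to the circle and your treatment of the outer circle $|\lambda|=1$ (taking $k=z_0$ in condition (2) of Lemma~\ref{l1}, so that $|w_n(z_0)|=1$ dominates every backward branch because $|w|\le 1$ on $\mathds{T}$) are fine, and this is indeed the route the paper indicates: the proposition is stated as a consequence of Lemma~\ref{l1}. But the substantive part of the claim is the interior of the annulus, $\mu_0\le|\lambda|<1$, and there your argument has a concrete flaw, not just an admitted gap. Condition (2) of Lemma~\ref{l1}, specialized to $m=0$ and $e=\varphi^n(k)$, forces $|w_n(k)|\ge|\lambda|^n$ for all $n$. If you choose $k$ in $E=\partial\mathcal{B}$ (the Julia part of the circle), then complete invariance and $|w|\le\mu_0$ on $E$ give $|w_n(k)|\le\mu_0^n$, so the two requirements are compatible only when $|\lambda|=\mu_0$ and $|w|\equiv\mu_0$ along the entire forward orbit of $k$ — which need not happen for any $k\in\partial\mathcal{B}$. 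So no point of $E$ can witness condition (2) for any $\lambda$ with $\mu_0<|\lambda|<1$, and your step (ii) cannot be repaired by a cleverer choice inside $E$.

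The point $k$ has to be taken \emph{inside} the immediate basin on the circle, $k\in\mathcal{B}\cap\mathds{T}$, $k\ne z_0$. Its forward orbit converges to $z_0$, where $|w|\to1$, so $|w_n(k)|^{1/n}\to1$; by choosing $k$ closer and closer to $\partial\mathcal{B}$ the orbit lingers near $\partial\mathcal{B}$ (where $|w|\le\mu_0$) for an arbitrarily long initial stretch before escaping to $z_0$, and a continuity/intermediate-value argument on the partial products lets one realize every decay rate $|\lambda|\in[\mu_0,1]$ in the sense required by (2): $|w_n(k)|\ge|\lambda|^n$ for all $n$. The domination of backward branches then works out because every $e$ with $\varphi^m(e)=\varphi^n(k)$ that is not on the forward orbit of $k$ has its orbit segment driven toward $\partial\mathcal{B}$, where $|w|\le\mu_0\le|\lambda|$, giving $|w_m(e)|\le C|\lambda|^m$ with the right constant. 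This is the part you labelled ``the hard part'' and left unexecuted; as written, your proposal establishes only the circle $|\lambda|=1$ (and hence the single parabolic case, where $\mu_0=1$), not the full annulus in the hyperbolic case.
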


\end{document}